\documentclass[11pt,oneside]{article}
\usepackage{amssymb}
\usepackage{amsmath}
\usepackage{array}
\usepackage{enumerate}
\usepackage{graphicx}
\usepackage{longtable}
\usepackage{amsmath}
\usepackage{amsmath, nccmath}
\usepackage{geometry}
\usepackage{amsthm}
\usepackage{mathrsfs}
\usepackage{fancyhdr}
\usepackage{pstricks}
\usepackage{quotes}
\mathsurround=2pt
\parindent=12pt
\parskip=6pt
\oddsidemargin=8mm
\evensidemargin=8mm
\topmargin=10pt
\headheight=14pt
\footskip=20pt
\textheight=230mm
\textwidth=145mm
\raggedbottom

\usepackage{graphicx}
\usepackage{lineno}
\baselineskip .9cm
\newtheorem{thm}{Theorem}
\newtheorem{lem}[thm]{Lemma}
\theoremstyle{definition}

\newtheorem{coro}[thm]{Corollary}

\newtheorem{rem}[thm]{Remark}

\newtheorem{pbm}{Problem}

\begin{document}
\title{\bf A Reduction of the Reconstruction Conjecture using Domination and Vertex Pair Parameters}
\author{\normalsize  J. Antony Aravind and  S. Monikandan\\ [-0.1cm]
\normalsize Department of Mathematics\\[-0.1cm]
\normalsize Manonmaniam Sundaranar University\\[-0.1cm]
\normalsize Tirunelveli \\[-0.1cm]
\normalsize Tamilnadu, INDIA\\[-0.1cm]
\normalsize  antonyaravind123@gmail.com, monikandans@msuniv.ac.in \vspace{-.2cm}}
\date{}
\maketitle

\begin{abstract}
A graph is \emph{reconstructible} if it is determined up to isomorphism from the collection of all its one-vertex-deleted subgraphs, known as the \emph{deck} of $G$. The \emph{Reconstruction Conjecture} (RC) posits that every finite simple graph with at least three vertices is reconstructible.
In this paper, we prove that the class of graphs with domination number $\gamma(G)=2$ is recognizable from the deck $D(G)$. We also establish a new reduction of the RC: it holds if and only if all $2$-connected graphs $G$ with $\gamma(G)=2$ or $\operatorname{diam}(G)=\operatorname{diam}(\overline{G})=2$ are reconstructible. To aid reconstruction, we introduce two new parameters: $dv(G,k_1,k_2,k_3)$, which counts the number of non-adjacent vertex pairs in $G$ with $k_1$ common neighbours, $k_2$ neighbours exclusive to the first vertex, and $k_3$ exclusive to the second; and $dav(G,k_1,k_2,k_3)$, defined analogously for adjacent pairs. For connected graphs with at least $12$ vertices and $\gamma(G)\geq 3$, we show these parameters are reconstructible from $D(G)$ via recursive equations and induction. Finally, we prove that $k$-geodetic graphs of diameter two with $\gamma(G),\gamma(\overline{G})\geq 3$ are reconstructible under conditions where a vertex degree matches the size of a specific subset derived from these parameters.

\end{abstract}

{\bf  Keywords:} Domination number; $k$-Geodetic graphs; Recognizable; Reconstructible. 

{\bf Subject Classification:} 05C60, 05C75. 

\footnote{Research is supported by  NBHM, Department of Atomic Energy, Govt. of India, Mumbai through a Major Research project. Sanction order No. 02011/14/2022/NBHM(R.P)/$\text{R\&D}$ II/10491.}

\section{Introduction}\label{intro}
In this paper, we explore finite, simple, and undirected graphs. For any graph $G,$ we denote its set of vertices as $V(G)$ and its set of edges as $E(G).$ Terminology not explicitly defined here follows the standard conventions outlined in \cite{w}. The degree of a vertex $v$ in $G,$ $deg_{G}v$ (or simply $deg~v$), represents the number of edges incident to $v.$ A path from vertex $u$ to vertex $v$ is termed $u,v$-path, and if such a path exists, the distance $d_{G}(u,v)$ or simply $d(u,v)$  is the length of a shortest $u,v$-path. The \emph{diameter} of $G,$ denoted $\mathrm{diam}(G),$ is the maximum distance between any two vertices in $V(G).$ The \emph{connectivity} of $G,$ written $\kappa(G),$ is the smallest number of vertices whose removal either disconnects $G$ or leaves it with a single vertex.  The \emph{neighbourhood} $N_{G}(v)$ or simply $N(v)$ consists of all vertices adjacent to  $v,$ and $N[v]=N(v)\cup \{v\}.$ For a subset $S\subseteq V(G),$  the subgraph induced by $S$ is denoted by $G[S].$ The notation $[n]$ represents  the set $\{1,2,...,n\},$  and a set $A$ is \emph{finite} if it can be bijectively mapped to $[n]$ for some $n \in \mathbb{N}\cup \{0\},$  with its size. This $n$ is the \emph{size} of $A,$ written $|A|.$ In a graph $G,$ a set $S \subseteq V(G)$ is a \emph{dominating set} if every vertex outside $S$ is adjacent to at least one vertex in $S,$ and the \emph{domination number} $\gamma(G)$ is the size of the smallest such set.

\indent A \emph{vertex-deleted subgraph} (or \emph{card}) $G-v$ of $G$ is the unlabeled subgraph formed  by removing  a vertex $v$ and all edges incident to it from $G$. The \emph{deck} of $G$ is the collection of all such cards, denoted by $\mathscr{D}(G).$  A graph $H$ is a \emph{reconstruction} of $G$ if it shares the same deck as $G.$ A graph is \emph{reconstructible} if every graph with the same deck  is isomorphic to it. A family $\mathcal{F}$  of graphs is \emph{recognizable} if all reconstructions of  a graph in $\mathcal{F}$ remains in $\mathcal{F},$ and \emph{weakly reconstructible} if all reconstructions of  a graph in $\mathcal{F}$ that stay within $\mathcal{F}$  are isomorphic to it. A family $\mathcal{F}$ of graphs is \emph{reconstructible} if it is both recognizable and weakly reconstructible, meaning every graph  in $\mathcal{F}$ is reconstructible. A parameter $p$ of graphs is reconstructible if its value is the same across all  reconstructions of a graph. The \emph{Reconstruction Conjecture} (RC) posits that all graphs with at least three vertices are reconstructible.  For comprehensive reviews of this problem, see the surveys in \cite{b}, \cite{bh}, \cite{h} and \cite{l}. 

\indent In 1988, Yang Yongzhi \cite{y} demonstrated that the RC holds if and only if every 2-connected graph is reconstructible. The Digraph Reconstruction Conjecture (DRC) was disproved by Stockmeyer\cite{pks},  implying that a proof of the RC must hinge on properties unique to undirected graphs, not extensible to directed graphs.  One such property relates to distances in the graph's complement, supported by two established results: a graph $G$ is reconstructible if and only if  its complement $\overline{G}$ is reconstructible, and if $\mathrm{diam}(G)>3,$ then $\mathrm{diam}(\overline{G})<3.$" Leveraging these, Gupta et al. \cite{g} in 2003 showed that the RC is true if and only if all connected graphs $G$ with $\text{diam}(G)=2$ or both $\mathrm{diam}(G)=3$ and $\mathrm{diam}(\overline{G})=3$ are reconstructible.  Ramachandran and Monikandan \cite{r} refined this further, proving that the RC holds if and only if all $2$-connected graphs $G$ satisfying $\mathrm{diam}(G)=2$ or $\mathrm{diam}(G)=\mathrm{diam}(\overline{G})=3$ are reconstructible. Devi Priya and Monikandan \cite{d} provided an additional reduction, showing that all distance-hereditary 2-connected graphs $G$ with $\mathrm{diam}(G)=2$ or $\mathrm{diam}(G)=\mathrm{diam}(\overline{G})=3$ are reconstructible.

Bondy and Hemminger \cite{b} proposed two strategies to prove the RC: reconstructing classes of graphs to eventually encompass all graphs, or reconstructing  graph parameters. Pursuing the latter,   Gupta et al. \cite{g} defined two parameters $pv(G,i)$ (for$i \in [0,n-2]$), counts the number of pairs of non adjacent vertices in $G$ with exactly $i$ paths of length two between them, and $pav(G,i)$ (for $i \in [0,n-2]$) counts the same for adjacent pairs. They proved the next theorem. 

\begin{thm}\label{1}\normalfont{\cite{g}}
Parameters $pv(G,i)$ and $pav(G,i) \forall i \in [0,n-2]$ are reconstructible.
\end{thm}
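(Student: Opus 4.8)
The plan is to derive $pv(G,i)$ and $pav(G,i)$ from a family of binomially weighted counts that are accessible through Kelly's Lemma, and then recover the exact values by binomial inversion. Write $c(u,v)=|N(u)\cap N(v)|$ for the number of common neighbours of a pair $\{u,v\}$, and for each $j\ge 0$ set
\[
A_j^{\mathrm{ad}}=\!\!\sum_{\{u,v\}:uv\in E(G)}\!\!\binom{c(u,v)}{j},\qquad
A_j^{\mathrm{non}}=\!\!\sum_{\{u,v\}:uv\notin E(G)}\!\!\binom{c(u,v)}{j},\qquad
A_j=A_j^{\mathrm{ad}}+A_j^{\mathrm{non}}.
\]
Grouping pairs by the value of $c(u,v)$ gives the triangular systems $A_j^{\mathrm{ad}}=\sum_{i\ge j}\binom{i}{j}pav(G,i)$ and $A_j^{\mathrm{non}}=\sum_{i\ge j}\binom{i}{j}pv(G,i)$. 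Since the matrix $\bigl(\binom{i}{j}\bigr)$ is unitriangular, knowledge of all $A_j^{\mathrm{ad}}$ (resp.\ $A_j^{\mathrm{non}}$) for $0\le j\le n-2$ determines all $pav(G,i)$ (resp.\ $pv(G,i)$) via the inversion $pv(G,i)=\sum_{j\ge i}(-1)^{j-i}\binom{j}{i}A_j^{\mathrm{non}}$, and similarly for $pav$. Hence it suffices to prove that every $A_j^{\mathrm{ad}}$ and every $A_j$ is reconstructible.

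The next step is to realise $A_j^{\mathrm{ad}}$ and $A_j$ as counts of injective homomorphisms of small fixed graphs. Let $F_j$ be the book $K_2\vee\overline{K_j}$ (an edge $ab$ together with $j$ further vertices, each joined to both $a$ and $b$), and let $\widehat F_j=K_{2,j}$. A straightforward count, built by first choosing the images of $a$ and $b$ and then an ordered $j$-tuple of distinct common neighbours, shows that the number of injective homomorphisms of $F_j$ into $G$ equals $2\,j!\,A_j^{\mathrm{ad}}$, while the number of injective homomorphisms of $K_{2,j}$ into $G$ equals $2\,j!\,A_j$ (a homomorphism does not see the non-edge $ab$, so $K_{2,j}$ accounts for all pairs, adjacent or not). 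For any fixed graph $F$ with $|V(F)|<|V(G)|$, the number of subgraphs of $G$ isomorphic to $F$ is reconstructible by Kelly's Lemma, hence so is the number of injective homomorphisms $F\to G$, which is $|\mathrm{Aut}(F)|$ times that subgraph count. Applying this with $F=F_j$ and $F=K_{2,j}$, both on $j+2$ vertices, yields that $A_j^{\mathrm{ad}}$, $A_j$, and therefore $A_j^{\mathrm{non}}=A_j-A_j^{\mathrm{ad}}$, are reconstructible for every $j\le n-3$.

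The remaining case is the top index $j=n-2$, where $F_j$ and $K_{2,j}$ have $n$ vertices and Kelly's Lemma no longer applies; this is the main obstacle, and it must be handled directly. Here $A_{n-2}^{\mathrm{ad}}=pav(G,n-2)$ and $A_{n-2}^{\mathrm{non}}=pv(G,n-2)$ outright. For the adjacent case, a pair $\{u,v\}$ with $uv\in E(G)$ and $c(u,v)=n-2$ is exactly a pair of vertices each of degree $n-1$, so $pav(G,n-2)=\binom{d}{2}$, where $d$ is the number of vertices of degree $n-1$; this is read off the degree sequence, which is reconstructible. For the non-adjacent case, a pair $\{u,v\}$ with $uv\notin E(G)$ and $c(u,v)=n-2$ is precisely a $K_2$ component of $\overline G$, so $pv(G,n-2)$ equals the number of $K_2$ components of $\overline G$; since $\mathscr{D}(\overline G)$ is determined by $\mathscr{D}(G)$ (because $\overline G-v=\overline{G-v}$) and connectedness is recognizable, either $\overline G$ is connected -- and then, as $n\ge 3$, it has no $K_2$ component and $pv(G,n-2)=0$ -- or $\overline G$ is disconnected and hence reconstructible by the classical theorem on disconnected graphs, so its number of $K_2$ components is determined. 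Feeding these boundary values together with the counts from the previous step into the inversion formulas finishes the proof. The only points that genuinely need care are this $j=n-2$ boundary analysis and the automorphism factors $|\mathrm{Aut}(F_j)|$ and $|\mathrm{Aut}(K_{2,j})|$ for the small exceptional values $j=1$ and $j=2$ (where $F_1=K_3$ and $K_{2,2}=C_4$ carry extra symmetry); everything else is routine double counting.
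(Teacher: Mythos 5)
Your proof is correct, but it is worth noting that the paper does not actually prove this statement: Theorem~\ref{1} is quoted from Gupta et al.\ \cite{g}, and the natural point of comparison is the paper's own treatment of the generalized parameters in Theorems~\ref{t3} and~\ref{t4} (which mirrors the original argument in \cite{g}). That approach is a direct card-sum recursion: one shows $\sum_i pv(G-v_i,j)=(n-2-j)\,pv(G,j)+(j+1)\,pv(G,j+1)$ by tracking how each pair survives or shifts under a single vertex deletion, and then runs a downward induction on $j$ starting from $j=n-2$, where the shifted term vanishes and the coefficient $n-2-j$ first becomes positive at $j=n-3$. Your route is genuinely different: you package the pair counts into the binomially weighted sums $A_j^{\mathrm{ad}}$, $A_j^{\mathrm{non}}$, realize these (up to the explicit factor $2\,j!$) as injective homomorphism counts of the book $K_2\vee\overline{K_j}$ and of $K_{2,j}$, invoke Kelly's Lemma for all $j\le n-3$, and recover $pv$ and $pav$ by binomial inversion. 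The two methods trade off as follows. The recursion is self-contained, needs no subgraph-counting machinery, and is the template that extends painlessly to the three-index parameters $dv$ and $dav$ of Section~5; moreover its top index $j=n-2$ is the \emph{easiest} case (the base of the induction), whereas in your argument it is the one place Kelly's Lemma fails and you must supply the separate structural analysis (degree-$(n-1)$ pairs for $pav(G,n-2)$; $K_2$ components of $\overline G$ for $pv(G,n-2)$) — which you do correctly, and which is essential: omitting it would leave the inversion without its boundary values. What your approach buys is that it reduces the theorem to standard, well-understood reconstruction facts and makes transparent \emph{why} the parameters are reconstructible (they are linear combinations of small-subgraph counts), and your handling of the automorphism factors for $F_1=K_3$ and $K_{2,2}=C_4$ is correctly flagged as harmless since the identity with the ordered count $2\,j!\,A_j$ holds independently of $|\mathrm{Aut}(F)|$.
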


{\it Since graphs with up to eleven  vertices are  known to be reconstructible \cite{bd}, this paper focuses on  graphs with at least twelve vertices.} In this paper, we introduce two new parameters, $dv(G,k_1,k_2,k_3)$ and $dav(G,k_1,k_2,k_3),$ where  $dv(G,k_1,k_2,k_3)$ counts pairs of non-adjacent vertices $x$ and $y$ in $G$ with $k_1$ vertices adjacent to both $x$ and $y,$ $k_2$ vertices adjacent only to $x,$ and $k_3$ vertices adjacent only to $y.$  But $dav(G,k_1,k_2,k_3)$ counts pairs of adjacent vertices $x$ and $y$ in $G$ with $k_1$ vertices adjacent to both $x$ and $y,k_2$ vertices adjacent only to $x~(y ~ is ~ excluded ~)$ and $k_3$ vertices adjacent only to $y ~(x ~ is ~ excluded ~).$ In Section 2, we establish a reduction of the Reconstruction Conjecture (RC) based on the domination number and diameter. To determine the number of common neighbours and degrees for vertex pairs (adjacent or non-adjacent) from the deck $\mathscr{D}(G)$, the domination number $\gamma(G)$ must exceed two: if $\gamma(G)=1$, then $G$ has a universal vertex $x$ with $\deg_G x = n-1$, rendering it reconstructible; if $\gamma(G)=2$, then two vertices $u,v$ (adjacent or not) dominate $V(G)\setminus\{u,v\}$, but their degrees are indeducible from $\mathscr{D}(G)$, as no card contains both alongside the rest of $G$. Thus, in Sections 3 and 4, we characterize the structure of graphs with $\gamma(G)=2$ and prove that this class is recognizable from $\mathscr{D}(G)$. For graphs with $\gamma(G)>2$, Section 5 employs the new parameters $dv(G,k_1,k_2,k_3)$ and $dav(G,k_1,k_2,k_3)$ to count non-adjacent and adjacent vertex pairs (incorporating degrees) and shows they are reconstructible from $\mathscr{D}(G)$. Finally, in Section 6, we reconstruct $k$-geodetic graphs $G$ with $diam(G)=diam(\overline{G})=2$ under specific parameter conditions.

The following two theorems are well-known in graph reconstruction.
\begin{thm}\label{t1}
$G$ is reconstructible if and only if  $\overline{G}$ is reconstructible.
\end{thm}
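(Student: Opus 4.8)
The plan is to exploit the fact that complementation commutes with vertex deletion, so that the deck of $\overline{G}$ is completely determined by the deck of $G$. First I would record the elementary observation that for any graph $F$ and any vertex $v\in V(F)$, the card $\overline{F}-v$ is isomorphic to $\overline{F-v}$: both have vertex set $V(F)\setminus\{v\}$, and for distinct $x,y$ in that set, $xy$ is an edge of $\overline{F}-v$ exactly when $xy\notin E(F)$, which is precisely the condition for $xy$ to be an edge of $\overline{F-v}$. Consequently, applying the complementation operation to each card of $\mathscr{D}(F)$ yields precisely the multiset $\mathscr{D}(\overline{F})$; in other words, ``complement every card'' is a well-defined operation on decks that sends $\mathscr{D}(F)$ to $\mathscr{D}(\overline{F})$.

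Next I would use this to set up a correspondence between reconstructions. Suppose $H$ is any reconstruction of $\overline{G}$, i.e.\ $\mathscr{D}(H)=\mathscr{D}(\overline{G})$. Complementing every card on both sides and invoking the previous paragraph (once for $H$ and once for $\overline{G}$, together with $\overline{\overline{G}}=G$) gives $\mathscr{D}(\overline{H})=\mathscr{D}(G)$; that is, $\overline{H}$ is a reconstruction of $G$. Since complementation is an involution on graphs that respects isomorphism, the assignment $H\mapsto\overline{H}$ is a bijection from the collection of reconstructions of $\overline{G}$ onto the collection of reconstructions of $G$, with inverse again given by complementation.

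Finally I would conclude. Assume $G$ is reconstructible and let $H$ be any reconstruction of $\overline{G}$. By the correspondence above, $\overline{H}$ is a reconstruction of $G$, so $\overline{H}\cong G$, and therefore $H\cong\overline{G}$; hence $\overline{G}$ is reconstructible. The reverse implication follows verbatim after replacing $G$ by $\overline{G}$ and using $\overline{\overline{G}}=G$.

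There is no genuine obstacle in this argument: the only points demanding care are bookkeeping at the level of unlabeled graphs and multisets---checking that complementing each card is well defined on isomorphism classes and that it is its own inverse on decks---after which the equivalence is immediate. I would also note the trivial consistency check $|V(\overline{G})|=|V(G)|$, so that the standing hypothesis of having at least three vertices transfers between $G$ and $\overline{G}$ without further comment.
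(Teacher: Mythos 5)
Your proof is correct, and it is the standard argument for this classical fact (due essentially to the observation that $\overline{F}-v\cong\overline{F-v}$, so complementing every card carries $\mathscr{D}(G)$ to $\mathscr{D}(\overline{G})$ and induces a bijection between reconstructions of $G$ and of $\overline{G}$). The paper states this theorem as well known and gives no proof of its own, so there is nothing to compare against; your argument is exactly the one found in the standard references and is complete.
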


\begin{thm}\label{t2}
Disconnected graphs are reconstructible.
\end{thm}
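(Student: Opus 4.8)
\emph{Proof proposal.} The plan is to split ``reconstructible'' into its two parts: showing that disconnectedness is \emph{recognizable} from $\mathscr{D}(G)$ (so that every reconstruction of a disconnected graph is disconnected), and then showing that, for a disconnected $G$, the multiset of connected components is determined by the deck. The result is classical (due to Kelly), so the write-up should be short; I sketch the route below.

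For recognizability I would simply count connected cards. If $G$ is connected on $n\ge 3$ vertices, take a spanning tree $T$: deleting a leaf $v$ of $T$ leaves a spanning tree of $G-v$, so each leaf of $T$ is a non-cut vertex of $G$, and since $T$ has at least two leaves, $\mathscr{D}(G)$ contains at least two connected cards. Conversely, if $G$ is disconnected with components $C_1,\dots,C_k$ ($k\ge 2$), then for $v\in C_i$ the card $G-v$ consists of $C_i-v$ together with the other $k-1$ components, which is connected only when $C_i=\{v\}$ and $k=2$; hence at most one card of a disconnected graph is connected. Therefore $G$ is disconnected if and only if $\mathscr{D}(G)$ has at most one connected card, a property of the deck alone, which also gives recognizability of the whole class.

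For the reconstruction step I would invoke Kelly's counting lemma: for any graph $F$ with $|V(F)|<n$, each (induced) copy of $F$ in $G$ survives in exactly $n-|V(F)|$ cards, so the number $s(F,G)$ of induced subgraphs of $G$ isomorphic to $F$ is reconstructible. Let $h\le n-1$ be the largest order of a component of $G$. I would recover the component multiset of $G$ by downward induction on order. If $F$ is connected with $|V(F)|=h$, then every induced copy of $F$ in $G$, being connected, lies inside one component, and having the maximum order it must be an entire component; hence $s(F,G)$ equals the number of components of $G$ isomorphic to $F$, and this determines all components of order $h$. Inductively, once the components of order $>t$ are known, for a connected $F$ with $|V(F)|=t$ every induced copy of $F$ in $G$ is either a whole component isomorphic to $F$ or sits inside a strictly larger component; the number of induced copies of $F$ inside the (now known) larger components is a finite computation, and subtracting it from the reconstructible quantity $s(F,G)$ yields the number of components isomorphic to $F$. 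Iterating down to $t=1$ reconstructs the full component multiset, hence $G$.

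The only genuinely delicate point, and the step I would be most careful about, is the observation that a \emph{connected} induced subgraph of $G$ is contained in a single component: this is precisely what makes the order-by-order peeling well defined. Everything else --- the double counting behind Kelly's lemma and the inclusion bookkeeping in the induction --- is routine.
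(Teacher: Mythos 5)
Your proposal is correct. Note that the paper itself offers no proof of this statement --- it is listed among results ``well-known in graph reconstruction'' and used as a black box --- so there is no in-paper argument to compare against. What you have written is the classical Kelly proof: recognizing disconnectedness by counting connected cards (at least two for a connected graph on $n\ge 3$ vertices via the leaves of a spanning tree, at most one for a disconnected graph), and then recovering the multiset of components by Kelly's counting lemma with downward induction on component order. Both halves check out, including the delicate point you flag (a connected induced subgraph lies in a single component, and one of maximum component order must \emph{be} a component), and the restriction $n\ge 3$ correctly avoids the $n=2$ degeneracy of the connectivity criterion.
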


\begin{lem}\label{L3}
If $G$ is connected and  $diam(G) \geq 3,$ then $\gamma(\overline{G})=2.$
\end{lem}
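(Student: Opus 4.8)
The plan is to exploit a pair of vertices realizing a distance of at least three in $G$ and show their images form a dominating set in $\overline{G}$. First I would fix vertices $u,v\in V(G)$ with $d_G(u,v)\geq 3$, which exist since $\operatorname{diam}(G)\geq 3$. The crucial observation is that such a pair can have \emph{no} common neighbour in $G$: a common neighbour $w$ of $u$ and $v$ would give a $u,v$-path of length two, forcing $d_G(u,v)\leq 2$. Moreover $u$ and $v$ are themselves non-adjacent in $G$, so in $\overline{G}$ they are adjacent.

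Next I would translate this to $\overline{G}$. Take any vertex $w\in V(G)\setminus\{u,v\}$. Since $w$ is not a common neighbour of $u$ and $v$ in $G$, it fails to be adjacent (in $G$) to at least one of $u,v$; hence in $\overline{G}$ the vertex $w$ is adjacent to at least one of $u,v$. Together with the fact that $u$ and $v$ dominate themselves, this shows $\{u,v\}$ is a dominating set of $\overline{G}$, so $\gamma(\overline{G})\leq 2$.

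It then remains to rule out $\gamma(\overline{G})=1$. If $\overline{G}$ had a universal vertex $x$, then $x$ would be adjacent to every other vertex in $\overline{G}$ and hence isolated in $G$; but $G$ is connected with at least four vertices (a diameter of at least three requires this), a contradiction. Combining the two bounds gives $\gamma(\overline{G})=2$.

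I do not expect a serious obstacle here; the only point requiring a little care is the no-common-neighbour deduction (which is exactly where $\operatorname{diam}(G)\geq 3$, rather than $\geq 2$, is used) and the clean exclusion of the $\gamma(\overline{G})=1$ case via connectedness of $G$. Everything else is a direct unwinding of the definitions of distance, complement, and domination number.
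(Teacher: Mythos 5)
Your argument is correct and is essentially identical to the paper's proof: both take a pair $u,v$ at distance at least $3$, observe they have no common neighbour and are non-adjacent in $G$, conclude $\{u,v\}$ dominates $\overline{G}$, and then exclude $\gamma(\overline{G})=1$ by noting that a universal vertex of $\overline{G}$ would be isolated in $G$, contradicting connectedness. No differences worth noting.
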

\begin{proof}
If $\text{diam}(G) \geq 3,$ there exist non-adjacent vertices $u$ and $v$ in $G$ with $d_{G}(u,v) \geq 3.$ Therefore, $u$ and $v$ have no common neighbour in $G.$ Hence, in $\overline{G},$ vertices $u$ and $v$ are adjacent and every other vertex is adjacent to at least one of them. Therefore $\{u,v\}$ is a dominating set of $\overline{G}$, so $\gamma(\overline{G}) \leq 2.$ Since $G$ contains no isolated vertex,  $\overline{G}$ contains no vertex adjacent to all other vertices. Therefore $\gamma(\overline{G})\geq 2,$ so $\gamma(\overline{G})=2.$
\end{proof}

\section{A reduction of RC}
\indent \indent In this section, we focus on specific classes of connected graphs defined by their domination number $(\gamma(G))$ and diameter $(\text{diam}(G))$, aiming to simplify the conjecture's scope.

\begin{thm}\label{21}
All connected graphs are reconstructible if and only if all connected graphs $G$ with  $\gamma(G)=2$ or 
$\text{diam}(G)=\text{diam}(\overline{G})=2$ are reconstructible.

\begin{proof}
Necessity  is obvious. For sufficiency, assume  all connected graph $G$ such that either $\gamma(G)=2$ or $"\text{diam}(G)=\text{diam}(\overline{G})=2, \gamma(G) \geq 3 ~ and ~ \gamma(\overline{G}) \geq 3"$ are reconstructible. Since graphs of diameter 1 as well as  2  are recognizable, we proceed by cases based on $\text{diam}(\overline{G}).$ \\
 If $\text{diam}(\overline{G})=1,$ $\overline{G}$ is a complete graph, so it is reconstructible. \\ Suppose $\text{diam}(\overline{G})=2.$ Now, if $\text{diam}(G)=1,$ then $G \cong K_n,$  so $G$ is reconstructible. If $\text{diam}(G)=2,$ then $G$ is reconstructible by our assumption. If $\text{diam}(G) \geq 3,$ then $\gamma(\overline{G})=2,$ by Lemma \ref{L3} and $\overline{G}$ is reconstructible by our assumption. \\ So, assume that $\text{diam}(\overline{G}) \geq 3.$
 Now, by Lemma \ref{L3},  $\gamma(G)=2$ and hence $G$ is reconstructible by our assumption.
\end{proof} 
\end{thm}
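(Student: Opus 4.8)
The plan is to prove only the ``if'' direction, the ``only if'' being immediate since the displayed class is a subclass of the connected graphs. So suppose that every connected graph $G$ with $\gamma(G)=2$, and every connected graph $G$ with $\operatorname{diam}(G)=\operatorname{diam}(\overline G)=2$, is reconstructible, and let $G$ be an arbitrary connected graph; the aim is to place $G$, or its complement, into one of these two families. The tools are: Theorem~\ref{t1}, so that I may freely replace $G$ by $\overline G$; Theorem~\ref{t2}, which disposes of $G$ once $G$ or $\overline G$ is disconnected; the classical fact that a graph with a universal vertex is reconstructible (so that $\gamma=1$ is harmless); Lemma~\ref{L3}; and the recognizability of ``connected'', ``$\operatorname{diam}=1$'', and ``$\operatorname{diam}=2$'', which guarantees that the case split below is shared by every reconstruction of $G$.

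First I would reduce. If $G$ is complete it is reconstructible, so assume $\operatorname{diam}(G)\ge 2$. If $\overline G$ is disconnected then $\overline G$ is reconstructible by Theorem~\ref{t2}, hence so is $G$ by Theorem~\ref{t1}; so assume $\overline G$ connected, whence $\overline G$ is not complete and $\operatorname{diam}(\overline G)\ge 2$. If $\gamma(G)\le 2$, then $\gamma(G)\in\{1,2\}$ and $G$ is reconstructible --- either $G$ has a universal vertex, or $G$ lies in the first family. Likewise, via Theorem~\ref{t1}, if $\gamma(\overline G)\le 2$ then $\overline G$ is reconstructible and hence so is $G$. Thus I may assume $G$ and $\overline G$ are both connected, with $\gamma(G),\gamma(\overline G)\ge 3$ and $\operatorname{diam}(G),\operatorname{diam}(\overline G)\ge 2$.

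Now I would split on $\operatorname{diam}(G)$. If $\operatorname{diam}(G)\ge 3$, then Lemma~\ref{L3} gives $\gamma(\overline G)=2$, contradicting $\gamma(\overline G)\ge 3$; hence $\operatorname{diam}(G)=2$. Running the same step with $\overline G$ in place of $G$ --- legitimate, since $\overline G$ is connected, and $\overline{\overline G}=G$ --- forces $\operatorname{diam}(\overline G)=2$ as well. Therefore $\operatorname{diam}(G)=\operatorname{diam}(\overline G)=2$ with $G$ connected, so $G$ lies in the second family and is reconstructible. As every connected graph has now been accounted for, all connected graphs are reconstructible.

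There is no deep obstacle: this is the ``complement-distance'' reduction already exploited by Gupta et al.\ and by Ramachandran and Monikandan, recast so that the residual classes are precisely those cut out by $\gamma=2$ and by $\operatorname{diam}(G)=\operatorname{diam}(\overline G)=2$. The points that need care are bookkeeping ones --- confirming that each branch is recognizable from $\mathscr{D}(G)$, so that a reconstruction $H$ of $G$ lands in the same branch, and verifying that nothing is left over after the eliminations, in particular that a diameter-two graph with domination number at least $3$ is genuinely driven, by applying Lemma~\ref{L3} to its complement, into the case $\operatorname{diam}(G)=\operatorname{diam}(\overline G)=2$. The single ingredient with no directed analogue --- the reason a reduction of this shape is worth isolating --- is Lemma~\ref{L3}, and that is where the argument uses that the graph is undirected.
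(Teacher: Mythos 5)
Your proposal is correct and follows essentially the same route as the paper: both arguments rest on Lemma~\ref{L3} together with Theorems~\ref{t1} and~\ref{t2}, differing only in bookkeeping (you eliminate the cases $\gamma(G)\le 2$ and $\gamma(\overline G)\le 2$ first and then deduce $\operatorname{diam}(G)=\operatorname{diam}(\overline G)=2$ by contradiction, whereas the paper cases directly on $\operatorname{diam}(\overline G)$ and then on $\operatorname{diam}(G)$). No gap.
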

  
\begin{thm}(Yongzhi\cite{y}) \label{22}
Every connected graph is reconstructible if and only if every $2$-connected graph is reconstructible.
\end{thm}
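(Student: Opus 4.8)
Necessity is trivial, since every $2$-connected graph is connected; the whole content lies in sufficiency. The plan is first to shrink the class that needs attention. By Theorem \ref{t2} disconnected graphs are reconstructible, and $2$-connected graphs are reconstructible by hypothesis, so it remains only to reconstruct connected graphs $G$ that possess a cut-vertex. I would begin by noting that this class is recognizable from $\mathscr{D}(G)$: a graph on at least three vertices is connected if and only if at least two of its cards are connected, and a connected graph on at least three vertices is $2$-connected if and only if every one of its cards is connected; hence the deck reveals whether $G$ is connected and separable. Fixing such a $G$ and any reconstruction $H$, we then know $H$ is also connected and separable, and the goal becomes $H\cong G$.

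Next I would run an induction on $n=|V(G)|$, the base cases $n\le 11$ being known. The structural device is the block--cut tree of $G$: since $G$ has a cut-vertex, this tree has at least two leaves, each of which is a block of $G$ meeting the rest of $G$ in exactly one cut-vertex -- an end-block. The idea is to peel off one end-block $B$ at its attaching cut-vertex $c$, write $G=B\cup_c H_0$ (the amalgamation identifying the shared vertex $c$) with $H_0=G-(V(B)\setminus\{c\})$ a smaller connected graph containing $c$, recover from $\mathscr{D}(G)$ both the rooted end-block $(B,c)$ and the rooted remainder $(H_0,c)$, and re-glue them at $c$. Here $B$ is either $K_2$ (when $c$ carries a pendant neighbour in $G$) or is $2$-connected on at least three vertices, hence reconstructible by inspection or by hypothesis respectively; and $H_0$ is reconstructible by the induction hypothesis when it still has a cut-vertex, and otherwise by the assumed reconstructibility of $2$-connected graphs or directly.

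To do the recovery, observe that deleting a vertex $w\in V(B)\setminus\{c\}$ leaves a card $G-w=(B-w)\cup_c H_0$ carrying an intact copy of $H_0$, while deleting a vertex $u\in V(H_0)\setminus\{c\}$ leaves a card $G-u=B\cup_c(H_0-u)$ carrying an intact copy of $B$. Using Kelly's counting lemma to tally, over the whole deck, the occurrences of the relevant rooted subgraphs, and combining this with the reconstructibility of $B$ and of $H_0$ noted above, I would aim to pin down the isomorphism types of the rooted pairs $(B,c)$ and $(H_0,c)$; these determine $G$ uniquely, and running the same recipe on $\mathscr{D}(H)$ then forces $H\cong G$, closing the induction.

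The step I expect to be the genuine obstacle is precisely this identification-and-gluing: extracting from $\mathscr{D}(G)$ not merely the abstract graphs $B$ and $H_0$ but the \emph{rooted} pairs $(B,c)$ and $(H_0,c)$ -- i.e. also the position of the shared cut-vertex inside each piece -- and doing so robustly when $G$ has several mutually isomorphic end-blocks, when $c$ occupies a symmetric position of $B$, and in the degenerate case $B\cong K_2$, where a single vertex deletion erases every trace of $B$. Getting the Kelly-counting bookkeeping right -- weighting candidate blocks by their automorphism groups and by the effect of a single deletion, and separating the cards that arose from deleting inside $B$ from those arising inside $H_0$ -- is where the real work lies; the surrounding reduction and recognizability arguments are routine.
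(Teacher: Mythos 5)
The paper offers no proof of this statement: it is quoted verbatim from Yongzhi's 1988 paper \cite{y}, where the argument occupies an entire research article. So the comparison here is really between your sketch and Yongzhi's actual proof, and the honest assessment is that your sketch has a genuine gap precisely where you yourself flag the ``genuine obstacle.'' The end-block-peeling induction you describe is the natural first attempt, and it is known not to close in the naive form you give. Knowing that $B$ is reconstructible as an abstract graph and that $H_0$ is reconstructible as an abstract graph does not yield the rooted pairs $(B,c)$ and $(H_0,c)$ from $\mathscr{D}(G)$; Kelly's lemma counts unrooted subgraphs on fewer vertices and does not directly give the multiset of end-blocks together with their attachment vertices, nor does it tell you how several mutually isomorphic end-blocks are distributed among the cut-vertices of $H_0$. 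Moreover, reconstructibility of $H_0$ in the sense of the induction hypothesis concerns the deck of $H_0$, which is not available: the cards of $G$ contain copies of $H_0$ or of $H_0-u$ embedded in larger graphs, and extracting $\mathscr{D}(H_0)$ from $\mathscr{D}(G)$ is itself unestablished. Finally, ``these determine $G$ uniquely'' is asserted, not proved: one must show that \emph{every} reconstruction $H$ admits the same rooted decomposition, which is the whole difficulty.

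The case you single out as degenerate, $B\cong K_2$ (pendant vertices), is in fact the entire content of Yongzhi's theorem. Bondy had already shown unconditionally that separable graphs with no end-vertices are reconstructible, so the conditional reduction to $2$-connected graphs only has to handle connected graphs possessing vertices of degree one --- exactly the case where, as you note, deleting the single vertex of $B\setminus\{c\}$ erases every trace of the end-block and the bookkeeping you propose breaks down. Yongzhi's argument does not proceed by peeling one end-block; it analyses the graph obtained by deleting all end-vertices and runs a considerably more delicate case analysis on how the pendant structure sits over the pruned graph. Your proposal is a reasonable map of where the difficulty lies, but it does not constitute a proof, and the step it leaves open is not a routine verification --- it is the theorem.
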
 

\begin{thm}
The RC is true if and only if all $2$-connected graphs $G$ such that $\gamma(G)=2$ or $\text{diam}(G)=\text{diam}(\overline{G})=2,$ are reconstructible.
\begin{proof}

This follows from prior results:
\begin{itemize}
        \item {\bf Disconnected Graphs}: Theorem \ref{t2} confirms that all disconnected graphs are reconstructible.
        \item  {\bf Connected Graphs}: Theorem \ref{21} states that all connected graphs are reconstructible if and only if the specified classes $\gamma(G)=2$ or $\text{diam}(G)=\text{diam}(\overline{G})=2$ are reconstructible.
        \item {\bf Link to RC:} The RC applies to all graphs with at least three vertices. Since graphs are either connected or disconnected, and disconnected cases are settled, the RC's validity hinges on connected graphs. Theorem \ref{21} reduces this to the specified classes, so the RC holds if and only if these classes are reconstructible. \end{itemize} \end{proof} \end{thm}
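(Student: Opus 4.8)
The plan is to route the argument through the complement duality of Lemma~\ref{L3} together with the sharper reduction of Ramachandran and Monikandan \cite{r} recalled in the Introduction, applying the hypothesis only to the two ``core'' classes. Necessity is immediate, since the RC makes every graph on at least three vertices reconstructible, in particular every $2$-connected $G$ with $\gamma(G)=2$ or $\text{diam}(G)=\text{diam}(\overline{G})=2$. For sufficiency I would assume that every $2$-connected graph $G$ with $\gamma(G)=2$ or $\text{diam}(G)=\text{diam}(\overline{G})=2$ is reconstructible, and then, using the Ramachandran--Monikandan reduction, reduce the RC to proving that every $2$-connected graph $H$ with $\text{diam}(H)=2$ or $\text{diam}(H)=\text{diam}(\overline{H})=3$ is reconstructible. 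For such an $H$ I would argue by cases, noting that by Theorem~\ref{t1} it is enough at each step to place $H$ or $\overline{H}$ in the hypothesised class (or to settle it through Theorems~\ref{t1}--\ref{t2}).

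When $\text{diam}(H)=\text{diam}(\overline{H})=3$, the complement $\overline{H}$ is connected (a disconnected complement makes $H$ a join, which has diameter at most $2$), so Lemma~\ref{L3} applied to $H$ gives $\gamma(\overline{H})=2$ and applied to $\overline{H}$ gives $\gamma(H)=2$; hence $H$ is a $2$-connected graph with $\gamma(H)=2$ and is reconstructible by hypothesis. When $\text{diam}(H)=2$, I would split on $\gamma(H)$: if $\gamma(H)=1$ then $H$ has a universal vertex of degree $n-1$ and is reconstructible; if $\gamma(H)=2$ then $H$ already lies in the hypothesised class; and if $\gamma(H)\ge 3$ then $\overline{H}$ is connected (a join has domination number at most $2$) and has diameter neither $1$ (else $H$ is edgeless) nor $\ge 3$ (else Lemma~\ref{L3} would force $\gamma(H)=2$), so $\text{diam}(\overline{H})=2$ and $H$ again lies in the hypothesised class. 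Exhausting these cases shows every graph of the Ramachandran--Monikandan class is reconstructible, so the RC follows.

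The main obstacle is that complementation does not preserve $2$-connectivity: a $2$-connected graph of diameter $\ge 3$ may have a complement that is merely connected, so one cannot naively complement an arbitrary large-diameter instance into the hypothesised class. The device that sidesteps this is exactly to funnel everything through the Ramachandran--Monikandan class, in which the diameter is pinned to $2$ or $3$ and the case ``diameter $3$ on both sides'' already forces $\gamma=2$ via two applications of Lemma~\ref{L3}; one must only take care to verify, at each use of Lemma~\ref{L3}, that the relevant complement is connected, which the diameter hypotheses guarantee. An alternative assembly would use Theorems~\ref{t2}, \ref{21} and~\ref{22} directly (Theorem~\ref{t2} for disconnected graphs, Theorem~\ref{21} to reduce the connected case to the core classes, Theorem~\ref{22} to upgrade ``connected'' to ``$2$-connected''), but this route carries the extra obligation of reconstructing connected graphs with a cut vertex that lie in the core classes, which is eased by the observation that a cut vertex together with $\text{diam}(G)=2$ forces a universal vertex, hence a disconnected $\overline{G}$, so that such a graph necessarily has $\gamma(G)=2$.
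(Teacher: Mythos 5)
Your argument is correct, but it takes a genuinely different route from the paper. The paper assembles the theorem from its own Theorem~\ref{t2} (disconnected graphs), Theorem~\ref{21} (the reduction for \emph{connected} graphs to the classes $\gamma(G)=2$ or $\mathrm{diam}(G)=\mathrm{diam}(\overline{G})=2$), and, implicitly, Yongzhi's Theorem~\ref{22} to pass from connected to $2$-connected. You instead start from the Ramachandran--Monikandan reduction quoted in the Introduction (RC holds iff every $2$-connected $H$ with $\mathrm{diam}(H)=2$ or $\mathrm{diam}(H)=\mathrm{diam}(\overline{H})=3$ is reconstructible) and show that every graph in that class either is trivially reconstructible or lies in the hypothesised class, via Lemma~\ref{L3} and a split on $\gamma(H)$. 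Your route buys something real: the delicate point in the paper's assembly is the case of a $2$-connected $G$ with $\mathrm{diam}(G)\ge 3$ and $\mathrm{diam}(\overline{G})=2$, where Lemma~\ref{L3} places $\overline{G}$ (not $G$) in the class $\gamma=2$, and $\overline{G}$ need not be $2$-connected --- the paper's two-line proof never addresses this. In your decomposition that case cannot arise, because in the ``diameter $3$ on both sides'' case Lemma~\ref{L3} applied to the (necessarily connected) complement pins $\gamma(H)=2$ for $H$ itself, which is already $2$-connected; so the hypothesis is only ever applied to the graph in hand. The cost is reliance on the cited external reduction of \cite{r} rather than on the paper's self-contained Theorems~\ref{21} and~\ref{22}.

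Two small points. First, in your closing remark on the alternative assembly, a cut vertex in a graph of diameter $2$ forces a universal vertex and hence $\gamma(G)=1$, not $\gamma(G)=2$; the graph is still reconstructible (disconnected complement), but note also that this observation only disposes of cut vertices in the $\mathrm{diam}=\overline{\mathrm{diam}}=2$ part of the core class, not in the $\gamma(G)=2$ part (e.g.\ $P_5$ has $\gamma=2$ and cut vertices), so that alternative route is not actually complete as sketched. Second, none of this affects your main argument, which does not use that alternative.
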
 

\section{Graphs $G$ with $\gamma(G)=2$}
\indent \indent For a graph $G$ with $\gamma(G)=2,$ we have two vertices $u$ and $v$ forming a dominating set. These vertices may or may not be adjacent to each other in $G$ -- their adjacency is not fixed. The vertex set of $G,$ denoted $V(G),$ is partitioned into four subsets as follows:
\begin{itemize}
\item $\{u,v\}$: The two dominating vertices
\item $X_u$: the set of vertices adjacent only to $u$ and not to $v.$
\item $X_v$: the set of vertices adjacent only to $v$ and not to $u.$
\item $X_{uv}$: the set of vertices adjacent to both $u$ and $v.$
\end{itemize}
It is stated that: $V(G)=\{u,v\} \cup X_u \cup X_v \cup X_{uv}.$\\
This partition implies that every vertex in  $G$ is either $u,v$ or belongs to one of $X_u$, $X_v$, or $X_{uv},$ based on its adjacency to $u$ and $v.$ Since $\{u,v\}$ is a dominating set, every vertex in $V(G)-\{u,v\}$ must be adjacent to at least one of $u$ or $v$. Let us verify this:
\begin{itemize}
\item Vertices in $X_u$ are adjacent to $u$ (and not to $v$), so they are dominated by $u.$
\item Vertices in $X_v$ are adjacent to $v$ (and not to $u$), so they are dominated by $v.$
\item Vertices in $X_{uv}$ are adjacent to both $u$ and $v$, so they are dominated by both. 
\end{itemize}

This confirms that every vertex outside $\{u,v\}$ is adjacent to at least one of $u$ or $v,$ satisfying $\gamma(G)=2$. The vertices  $u$ and $v$
 themselves are in the dominating set, so they do not need to be dominated by another vertex. Thus, the domination condition holds regardless of whether $u$ and $v$ are adjacent in 
$G.$

\begin{figure}[h]
\centering
\scalebox{1} 
{
\begin{pspicture}(0,-2.5)(11,2.5)
\psline[linewidth=0.04cm,doubleline=true,doublesep=0.06](0,0)(2.25,1)
\psline[linewidth=0.04cm,doubleline=true,doublesep=0.06](0,0)(1.75,-1)
\psline[linewidth=0.04cm,doubleline=true,doublesep=0.06](5,0)(3.25,-1)
\psline[linewidth=0.04cm,doubleline=true,doublesep=0.06](5,0)(2.75,1)
\psdots[dotsize=0.18](0,0)
\psdots[dotsize=0.18](5,0)
\psline[linewidth=0.04cm,linestyle=dashed,dash=0.16cm 0.16cm](0,0)(3.25,-0.5)
\psline[linewidth=0.04cm,linestyle=dashed,dash=0.16cm 0.16cm](5,0)(1.75,-0.5)
\psellipse[linewidth=0.04,dimen=outer](2.5,1)(1,0.5)
\pscircle[linewidth=0.04,dimen=outer](1.75,-1){0.5}
\pscircle[linewidth=0.04,dimen=outer](3.25,-1){0.5}
\rput(-0.25,0){$u$}
\rput(5.25,0){$v$}
\rput(2.5,1.75){$X_{uv}$}
\rput(1.75,-1.75){$X_u$}
\rput(3.25,-1.75){$X_v$}
\rput(2.50,-2.25){$G$}

\psline[linewidth=0.04cm,linestyle=dashed,dash=0.16cm 0.16cm](6,0)(8.25,1)
\psline[linewidth=0.04cm,linestyle=dashed,dash=0.16cm 0.16cm](11,0)(8.75,1)
\psline[linewidth=0.04cm,doubleline=true,doublesep=0.06](6,0)(7.75,-1)
\psline[linewidth=0.04cm,doubleline=true,doublesep=0.06](11,0)(9.25,-1)
\psdots[dotsize=0.18](6,0)
\psdots[dotsize=0.18](11,0)
\psline[linewidth=0.04cm,linestyle=dashed,dash=0.16cm 0.16cm](6,0)(9.25,-0.5)
\psline[linewidth=0.04cm,linestyle=dashed,dash=0.16cm 0.16cm](11,0)(7.75,-0.5)
\psellipse[linewidth=0.04,dimen=outer](8.5,1)(1,0.5)
\pscircle[linewidth=0.04,dimen=outer](7.75,-1){0.5}
\pscircle[linewidth=0.04,dimen=outer](9.25,-1){0.5}
\rput(5.75,0){$u$}
\rput(11.25,0){$v$}
\rput(8.5,1.75){$X_{uv}$}
\rput(7.75,-1.75){$X_v$}
\rput(9.25,-1.75){$X_u$}
\rput(8.5,-2.25){$\overline{G}$}
\end{pspicture} 
}
\caption{Structure of a graph $G \in \mathscr{H}$ and its complement}
\label{f1}
\end{figure}
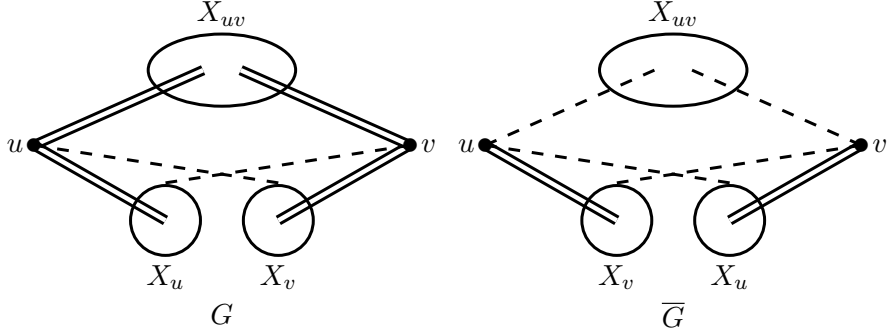
Observe that in the figure described above,  $u$ and $v$ represent individual vertices, while $X_u,X_v$ and $X_{uv}$ represent subsets of vertices. A double line indicates that all possible edges exist between the connected entities, a single dashed line signifies the absence of any edges, and the lack of a line between two vertex subsets or between two vertices implies that edges between them are optional$-$they may or may not be present. 

\section{{ Graphs with domination number two are recognizable}}
 \indent \indent Since graphs with a universal vertex and graphs of order at most eleven are reconstructible, we consider only graphs of order at least twelve and domination number at least two. We define $\mathscr{H}=\{G| \gamma(G)=2\},$ the family of graphs with domination number 2, and partition it into three disjoint subfamilies based on diameter: \\
$\mathscr{H}_1=\{G| \gamma(G)=2 ~ \text{and} ~ diam(\overline{G}) = 1\}.$\\
 $\mathscr{H}_2=\{G| \gamma(G)=2 ~ \text{and} ~ diam(\overline{G}) = 2\}.$\\
 $\mathscr{H}_3=\{G| \gamma(G)=2 ~ \text{and} ~ diam(\overline{G}) \geq 3\}.$

{ \begin{lem}\label{l1}
$\mathscr{H}_1 = \emptyset.$
\end{lem}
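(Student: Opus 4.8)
The plan is to argue by contradiction, and the whole thing should be a one-line structural observation. Suppose $G\in\mathscr{H}_1$, so that $\gamma(G)=2$ and $\operatorname{diam}(\overline{G})=1$. The first step is to note that $\operatorname{diam}(\overline{G})=1$ holds precisely when $\overline{G}$ is a complete graph, i.e. $\overline{G}\cong K_n$; complementing, this forces $G\cong\overline{K_n}$, the edgeless graph on $n$ vertices.

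The second step is to derive a contradiction from $\gamma(G)=2$. A dominating set $\{u,v\}$ of $G$ of size two requires every vertex of $V(G)\setminus\{u,v\}$ to be adjacent (in $G$) to $u$ or to $v$. Since we work throughout with $n\geq 12$, in particular $n\geq 3$, there is at least one vertex $w\notin\{u,v\}$, and it must be joined to $u$ or $v$, so $G$ has an edge — contradicting $G\cong\overline{K_n}$. Equivalently, one can just compute $\gamma(\overline{K_n})=n\geq 12>2$. Either way, no such $G$ exists, so $\mathscr{H}_1=\emptyset$.

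There is essentially no obstacle here; the only point worth flagging is that the conclusion genuinely uses the standing hypothesis $n\geq 12$ (equivalently $n\geq 3$): without it, $\overline{K_2}$ would satisfy $\gamma=2$ and $\operatorname{diam}(\overline{G})=1$, so the claim is false for $n=2$. The role of this lemma is purely bookkeeping — it shows the first block of the partition $\mathscr{H}=\mathscr{H}_1\cup\mathscr{H}_2\cup\mathscr{H}_3$ is vacuous, so the recognizability analysis in the remainder of the section only needs to handle $\mathscr{H}_2$ (diameter-two complement) and $\mathscr{H}_3$ (complement of diameter at least three).
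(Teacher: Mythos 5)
Your proposal is correct and follows essentially the same route as the paper: both reduce $\operatorname{diam}(\overline{G})=1$ to $G\cong\overline{K_n}$ and then observe that $\gamma(\overline{K_n})=n$ contradicts $\gamma(G)=2$ under the standing assumption on the order of $G$. Your explicit flagging of why the small-$n$ case must be excluded is a welcome clarification but not a different argument.
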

\begin{proof}
Suppose there exists a graph $G \in \mathscr{H}_1.$ Then $\text{diam}(\overline{G})=1,$ meaning $\overline{G}$ is a complete graph. Consequently, $G \cong \overline{K_n}$ for some $n.$ But the condition $\gamma(G)=2$ forces $n=2,$ which we already excluded for consideration. Therefore, $\mathscr{H}_1 = \emptyset.$ 
\end{proof}}

\begin{lem}\label{2}
For a graph  $G$  with $\text{diam}(\overline{G})=2,$  $\gamma(G)=2$ if and only if $pav(\overline{G},0)>0.$
\end{lem}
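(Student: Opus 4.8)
The plan is to translate the domination condition on $G$ directly into a statement about common neighbours in $\overline{G}$, and then to invoke the hypothesis $\text{diam}(\overline{G})=2$ only at the single point where it is genuinely needed: to force a size-two dominating set of $G$ to be an independent pair of $G$ (equivalently, an edge of $\overline{G}$). The key dictionary I would record first is this: for any two distinct vertices $u,v$ of $G$, the pair $\{u,v\}$ is a dominating set of $G$ if and only if $u$ and $v$ have no common neighbour in $\overline{G}$. Indeed, a vertex $w\notin\{u,v\}$ fails to be dominated by $\{u,v\}$ in $G$ exactly when $w$ is non-adjacent to both $u$ and $v$ in $G$, i.e. adjacent to both $u$ and $v$ in $\overline{G}$; so $\{u,v\}$ dominates $G$ iff no such $w$ exists. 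I would also note at the outset that $\text{diam}(\overline{G})=2$ makes $\overline{G}$ connected, hence $G$ has no universal vertex and $\gamma(G)\geq 2$.

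For the forward implication, assume $\gamma(G)=2$ and fix a dominating set $\{u,v\}$ of $G$. By the dictionary, $u$ and $v$ have no common neighbour in $\overline{G}$. If $u$ and $v$ were non-adjacent in $\overline{G}$, then $d_{\overline{G}}(u,v)\geq 3$, contradicting $\text{diam}(\overline{G})=2$; hence $uv\in E(\overline{G})$. Thus $uv$ is an edge of $\overline{G}$ that lies in no triangle, i.e. an adjacent pair of $\overline{G}$ with exactly $0$ common neighbours, so it is counted by $pav(\overline{G},0)$ and therefore $pav(\overline{G},0)>0$.

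For the converse, assume $pav(\overline{G},0)>0$. Then some edge $uv$ of $\overline{G}$ has no common neighbour in $\overline{G}$, so by the dictionary $\{u,v\}$ dominates $G$, giving $\gamma(G)\leq 2$. Combined with $\gamma(G)\geq 2$ (from the connectedness of $\overline{G}$), we conclude $\gamma(G)=2$.

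Since the proof reduces to a complementation bookkeeping, I do not expect a serious obstacle; the only step that requires any thought is the forward direction, where the hypothesis $\text{diam}(\overline{G})=2$ is precisely what excludes the possibility that a minimum dominating set of $G$ is an edge of $G$ (a non-edge of $\overline{G}$). Without that hypothesis the statement would fail, because such a dominating pair would sit at distance $\geq 3$ in $\overline{G}$ and need not produce any triangle-free edge of $\overline{G}$. Everything else is routine.
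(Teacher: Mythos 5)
Your proof is correct and follows essentially the same route as the paper's: translate domination of $G$ by a pair into the absence of common neighbours of that pair in $\overline{G}$, use $\mathrm{diam}(\overline{G})=2$ to force the dominating pair to be an edge of $\overline{G}$ (hence counted by $pav(\overline{G},0)$), and use connectedness of $\overline{G}$ to rule out $\gamma(G)=1$ in the converse. Your write-up is, if anything, slightly more explicit than the paper's at the step establishing $\gamma(G)\geq 2$.
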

\begin{proof}
{\it Necessity:} Assume $\gamma(G)=2.$ Let $\{u, v\}$ be a dominating set of $G$. Then $uv \notin E(G)$ as otherwise  $d_{\overline{G}}(u,v) \geq 3,$  contradicting the graph $\overline{G}$ having $\text{diam}(\overline{G})=2.$ Now, in $\overline{G}$, the vertices $u$ and $v$ are adjacent and none of the other vertices is a common neighbour of $u$ and $v$.   
This implies $pav(\overline{G},0)>0.$\\
{\it Sufficiency:} Assume $pav(\overline{G},0)>0.$ Then there exists a pair of adjacent vertices $x$ and $y$ in $\overline{G}$ such that every other vertex is either adjacent to exactly one of them or none of them. Therefore, in $G,$ the vertices $x$ and $y$ are non adjacent  and every other vertex is adjacent to at least one of them. Hence the set $\{x, y\}$ is a dominating set of  $G,$ so $\gamma(G)\leq 2.$ But the condition $\text{diam}(\overline{G})=2$  forces $\gamma(G)= 2.$
\end{proof}

\begin{lem}\label{l2}
The class $\mathscr{H}_2$ is recognizable. 
\end{lem}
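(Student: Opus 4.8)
The plan is to show that membership in $\mathscr{H}_2$ can be decided from the deck $\mathscr{D}(G)$ by reducing it to already-reconstructible data: the diameter of $\overline{G}$ and the parameter $pav(\overline{G},0)$. Recall first that $\mathscr{H}_2$ consists of those $G$ with $\gamma(G)=2$ and $\operatorname{diam}(\overline{G})=2$. By Lemma \ref{2}, for a graph $G$ with $\operatorname{diam}(\overline{G})=2$, the condition $\gamma(G)=2$ is equivalent to $pav(\overline{G},0)>0$. So $G\in\mathscr{H}_2$ if and only if $\operatorname{diam}(\overline{G})=2$ and $pav(\overline{G},0)>0$. Thus it suffices to argue that each of these two conditions is a recognizable property, i.e. invariant across all reconstructions of $G$.

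First I would handle the diameter condition. The value $\operatorname{diam}(\overline{G})$ is determined by $\operatorname{diam}(G)$ together with the dichotomy ``$\operatorname{diam}(G)\le 2$ versus $\operatorname{diam}(G)\ge 3$'': indeed, if $\operatorname{diam}(G)\ge 3$ then $\operatorname{diam}(\overline{G})\le 2$ (and $\overline{G}$ is connected and not complete, so $\operatorname{diam}(\overline{G})=2$ unless $\overline{G}$ is complete, which is excluded for $n\ge 12$ with $\gamma(G)=2$); and when $\operatorname{diam}(G)\le 2$ one needs $\overline{G}$ connected with no universal vertex to land in $\operatorname{diam}(\overline{G})=2$. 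The key input is that whether a graph is connected, and if so whether its diameter is $1$, $2$, or at least $3$, is recognizable from the deck — connectedness is reconstructible (Theorem \ref{t2} and standard facts), $\operatorname{diam}(G)=1$ means $G=K_n$ which is trivially recognizable, and ``$\operatorname{diam}(G)=2$'' versus ``$\operatorname{diam}(G)\ge 3$'' is a classical recognizable property (a graph has diameter $2$ iff it is not complete, is connected, and every card of a non-adjacent-realising vertex shows the two non-neighbours still at distance $2$ — more simply, the adjacency/non-adjacency and common-neighbour counts of pairs are controlled via $pv,pav$). Since $\operatorname{diam}(\overline{G})=2$ is then expressible through these recognizable diameter classes of $G$ and $\overline{G}$, and since $G\mapsto\overline{G}$ preserves decks-up-to-complementation (Theorem \ref{t1}), the condition $\operatorname{diam}(\overline{G})=2$ is recognizable.

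Next I would invoke Theorem \ref{1}: the parameters $pv(G,i)$ and $pav(G,i)$ are reconstructible for all $i$, and applying this to $\overline{G}$ (legitimate since reconstructibility transfers under complementation, Theorem \ref{t1}) gives that $pav(\overline{G},0)$ is reconstructible — in particular the truth value of $pav(\overline{G},0)>0$ is the same for every reconstruction of $G$. Combining: if $H$ is any reconstruction of $G$ with $G\in\mathscr{H}_2$, then $H$ has $\operatorname{diam}(\overline{H})=2$ (by the recognizability of that condition) and $pav(\overline{H},0)=pav(\overline{G},0)>0$, so by Lemma \ref{2} applied to $H$ we get $\gamma(H)=2$, hence $H\in\mathscr{H}_2$. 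This establishes that $\mathscr{H}_2$ is recognizable.

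I expect the main obstacle to be the first paragraph of the argument proper — cleanly certifying that ``$\operatorname{diam}(\overline{G})=2$'' is recognizable from $\mathscr{D}(G)$. One must be careful about the boundary cases ($\overline{G}$ disconnected, $\overline{G}$ complete, $\overline{G}$ having a universal vertex), and confirm that the standard recognizability of the diameter-$2$ class and of connectedness really do carry over to $\overline{G}$ via the deck; the cleanest route is probably to note that $G$ has $\operatorname{diam}(\overline{G})=2$ exactly when $\overline{G}$ is connected, non-complete, and has no vertex of degree $n-1$, each of which is a recognizable property of $\overline{G}$ (equivalently a deck-complementation-invariant property of $G$), rather than trying to read off $\operatorname{diam}(\overline{G})$ from $\operatorname{diam}(G)$ directly. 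Once that is pinned down, the remainder is a short syllogism using Lemma \ref{2} and Theorem \ref{1}.
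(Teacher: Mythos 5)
Your overall strategy is the same as the paper's: pass to the deck of $\overline{G}$ via Theorem \ref{t1}, use the reconstructibility of $pav(\overline{G},0)$ (Theorem \ref{1}) together with Lemma \ref{2} to certify $\gamma(G)=2$, and separately certify $\operatorname{diam}(\overline{G})=2$. The second half of your argument (the syllogism through Lemma \ref{2}) is correct and is exactly what the paper does.

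The gap is in the step you yourself flag as the main obstacle. Your proposed ``cleanest route'' — that $\operatorname{diam}(\overline{G})=2$ exactly when $\overline{G}$ is connected, non-complete, and has no universal vertex — is false: a path or a long cycle is connected, non-complete, and has no vertex of degree $n-1$, yet has diameter at least $3$. Likewise your opening suggestion that $\operatorname{diam}(\overline{G})$ is determined by $\operatorname{diam}(G)$ fails when $\operatorname{diam}(G)=2$, since then $\operatorname{diam}(\overline{G})$ can be anything from $2$ upward (or $\overline{G}$ can be disconnected). The correct and short criterion, which the paper uses, is the one you only gesture at in passing: $\operatorname{diam}(\overline{G})\le 2$ if and only if $pv(\overline{G},0)=0$, i.e.\ every non-adjacent pair of $\overline{G}$ has a common neighbour (note this already forces $\overline{G}$ connected), with completeness of $\overline{G}$ excluded separately (it would force $G$ edgeless, hence $\gamma(G)=n>2$). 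Since $pv(\overline{G},0)$ is reconstructible by Theorem \ref{1} applied to $\overline{G}$, this closes the recognition of the diameter condition; replacing your degree-based characterization with this $pv$ test makes your proof essentially identical to the paper's.
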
 
\begin{proof}  
From the given deck $\mathscr{D}(G)$ of a graph $G,$ form the deck  $ \mathscr{D}(\overline{G})$ (Theorem \ref{t1}) and compute the reconstructible parameters  $pv(\overline{G},0)$ and $pav(\overline{G},0)$ (Theorem \ref{1}). If $pv(\overline{G},0)=0,$ then $\text{diam}(\overline{G})=2.$ Also, if $pav(\overline{G},0)>0,$ then by Lemma \ref{2},$\gamma(G)=2$ and placing $G$ in  $\mathscr{H}_2.$ Thus, $\mathscr{H}_2$ is recognizable. 
\end{proof}

\begin{lem}\label{14}
The class $\mathscr{H}_3$ is recognizable.
\end{lem}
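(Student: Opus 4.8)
The plan is to recognize $\mathscr{H}_3$ from the deck by working in the complement, exactly as in Lemma~\ref{l2}, but now handling the case $\operatorname{diam}(\overline{G})\geq 3$. First I would pass from $\mathscr{D}(G)$ to $\mathscr{D}(\overline{G})$ using Theorem~\ref{t1} and compute the reconstructible parameters $pv(\overline{G},0)$ and $pav(\overline{G},0)$ via Theorem~\ref{1}. The key observation is that $\operatorname{diam}(\overline{G})\geq 3$ is equivalent to the existence of two non-adjacent vertices of $\overline{G}$ with no common neighbour, i.e. to $pv(\overline{G},0)>0$; conversely $pv(\overline{G},0)=0$ together with $pav(\overline{G},0)=0$ would mean $\overline{G}$ is complete, i.e. $\operatorname{diam}(\overline{G})=1$. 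So the recognition test is: $G\in\mathscr{H}_3$ iff $pv(\overline{G},0)>0$ and, in addition, $\gamma(G)=2$, and the job is to extract the condition $\gamma(G)=2$ from the deck in this regime.

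The main point, which I would state and prove as the analogue of Lemma~\ref{2}, is that when $\operatorname{diam}(\overline{G})\geq 3$ one already has $\gamma(G)=2$ automatically, by Lemma~\ref{L3} applied to $\overline{G}$ (since $\operatorname{diam}(\overline{G})\geq 3$ forces $\gamma(\overline{\overline{G}})=\gamma(G)=2$). Hence in this case no extra parameter is needed: $pv(\overline{G},0)>0$ alone certifies $\operatorname{diam}(\overline{G})\geq 3$, which in turn certifies $\gamma(G)=2$, so $G\in\mathscr{H}_3$. I must also check the converse direction for recognizability: if $H$ is any reconstruction of $G$ with $G\in\mathscr{H}_3$, then $pv(\overline{H},0)=pv(\overline{G},0)>0$ by reconstructibility of the parameter, so $\operatorname{diam}(\overline{H})\geq 3$, whence $\gamma(H)=2$ by Lemma~\ref{L3}; and $\operatorname{diam}(\overline{H})\geq 3$ places $H$ in $\mathscr{H}_3$ rather than $\mathscr{H}_1$ or $\mathscr{H}_2$. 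So the whole family is recognized by the single deck-computable inequality $pv(\overline{G},0)>0$.

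The steps in order: (1) recall $\overline{G}$'s deck and the reconstructibility of $pv(\overline{G},0)$; (2) prove the equivalence $\operatorname{diam}(\overline{G})\geq 3 \iff pv(\overline{G},0)>0$ (the direction $\geq 3 \Rightarrow pv>0$ uses a diametral pair of $\overline{G}$; the direction $pv>0 \Rightarrow \operatorname{diam}\geq 3$ uses that a non-adjacent pair with no common neighbour lies at distance at least $3$); (3) invoke Lemma~\ref{L3} on $\overline{G}$ to get $\gamma(G)=2$ from $\operatorname{diam}(\overline{G})\geq 3$; (4) conclude recognizability by noting every reconstruction $H$ of $G\in\mathscr{H}_3$ satisfies the same deck inequality and hence lies in $\mathscr{H}_3$. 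I anticipate no serious obstacle here — the only thing to be careful about is the trivial edge cases ruled out earlier (order at least twelve, no universal vertex) so that $\overline{G}$ is genuinely a graph with $\operatorname{diam}(\overline{G})\geq 3$ and $\gamma(G)=2$ rather than some degenerate configuration; these are already excluded by the standing assumptions of Section~4.
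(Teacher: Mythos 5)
Your proposal matches the paper's proof essentially step for step: both compute the reconstructible parameter $pv(\overline{G},0)$ from the deck of $\overline{G}$, use $pv(\overline{G},0)>0$ to certify $\operatorname{diam}(\overline{G})\geq 3$, and then invoke Lemma~\ref{L3} (applied to $\overline{G}$) to conclude $\gamma(G)=2$ and hence $G\in\mathscr{H}_3$. Your additional remarks making the equivalence $\operatorname{diam}(\overline{G})\geq 3\iff pv(\overline{G},0)>0$ explicit and checking that every reconstruction lands back in $\mathscr{H}_3$ are just a more careful write-up of the same argument.
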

\begin{proof}
From the deck $\mathscr{D}(\overline{G})$ of $\overline{G},$ compute $pv(\overline{G},0),$ the number of non-adjacent vertex pairs with no common neighbours, which is reconstructible (analogous to Theorem \ref{1}). If  $pv(\overline{G},0)>0,$ then $\text{diam}(\overline{G}) \geq 3$ (since a path of length at least three exists between such pairs). By Lemma \ref{L3},  $\gamma(G)=2,$ so $G \in \mathscr{H}_3,$ making $\mathscr{H}_3$ recognizable.
\end{proof}

\begin{thm} \label{HR}
The class $\mathscr{H}$ is recognizable.
\begin{proof}
 We procced by three cases depending on $\text{diam}(\overline{G}).$  Whether $\text{diam}(\overline{G})$ has value 1, 2 or above 2 can be determined from the deck of $\overline{G}.$ 
\begin{itemize}
\item If $\text{diam}(\overline{G})=1,$ then $\overline{G} \cong K_n,$ and $G$ is disconnected, which is reconstructible (Theorem \ref{t2}). Since $\gamma(\overline{K_n}) =n>2,~ G \notin \mathscr{H}.$
\item If $\text{diam}(\overline{G})=2,$ compute $pav(\overline{G},0)$ (Theorem \ref{1}). If $pav(\overline{G},0)>0,$ then $\gamma(G)=2$ (Lemma \ref{2}), so $ G \in \mathscr{H}_2 \subset \mathscr{H}$ ;  if $pav(\overline{G},0)=0,$ then  $\gamma(G) \neq 2,$ so $ G \notin \mathscr{H}.$
 \item If $\text{diam}(\overline{G})\geq 3,$ then $\gamma(G)=2$ (Lemma \ref{14}), and $ G \in \mathscr{H}_3 \subset \mathscr{H}.$
\end{itemize}
 Thus, $\mathscr{H}$ is recognizable. 

\end{proof}
\end{thm}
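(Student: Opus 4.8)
The plan is to prove that $\mathscr{H}$ is recognizable by showing that membership in $\mathscr{H}$ is detectable from $\mathscr{D}(G)$, splitting into the three diameter-based subcases for $\mathrm{diam}(\overline{G})$. First I would note that $\mathrm{diam}(\overline{G})$ itself is determined by the deck: form $\mathscr{D}(\overline{G})$ via Theorem~\ref{t1}, and since the parameters $pv(\overline{G},0)$ and $pav(\overline{G},0)$ are reconstructible (Theorem~\ref{1}), one can distinguish whether $\mathrm{diam}(\overline{G})$ is $1$, exactly $2$, or at least $3$: $\mathrm{diam}(\overline{G})=1$ iff $\overline{G}$ is complete (detectable since every card of a complete graph is complete, or equivalently $G$ is edgeless and hence disconnected); $\mathrm{diam}(\overline{G})=2$ iff $\overline{G}$ is connected and $pv(\overline{G},0)=0$; and $\mathrm{diam}(\overline{G})\ge 3$ iff $pv(\overline{G},0)>0$. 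This trichotomy is genuinely deck-determined, so the case analysis is legitimate.

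Next I would handle each case using the lemmas already established. In the case $\mathrm{diam}(\overline{G})=1$, $\overline{G}\cong K_n$ forces $G\cong \overline{K_n}$, which is disconnected (hence reconstructible by Theorem~\ref{t2}) and satisfies $\gamma(G)=n>2$ since we only consider $n\ge 12$; thus $G\notin\mathscr{H}$ and recognizability holds vacuously. In the case $\mathrm{diam}(\overline{G})=2$, compute $pav(\overline{G},0)$: by Lemma~\ref{2}, $\gamma(G)=2$ iff $pav(\overline{G},0)>0$, so $G\in\mathscr{H}_2$ precisely when $pav(\overline{G},0)>0$, and otherwise $\gamma(G)\ne 2$ so $G\notin\mathscr{H}$. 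In the case $\mathrm{diam}(\overline{G})\ge 3$, Lemma~\ref{14} (via Lemma~\ref{L3}) gives $\gamma(G)=2$ outright, so $G\in\mathscr{H}_3\subseteq\mathscr{H}$ always. Since $\mathscr{H}=\mathscr{H}_1\cup\mathscr{H}_2\cup\mathscr{H}_3$ with $\mathscr{H}_1=\emptyset$ by Lemma~\ref{l1}, and since a reconstruction $H$ of $G$ has the same deck, $H$ falls into the same diameter case and receives the same verdict; hence all reconstructions of a graph in $\mathscr{H}$ lie in $\mathscr{H}$, which is exactly recognizability.

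The main subtlety — really the only place requiring care rather than citation — is verifying that the three-way distinction on $\mathrm{diam}(\overline{G})$ is truly invariant across reconstructions. Theorem~\ref{t1} only asserts reconstructibility of $\overline{G}$ as a graph (equivalently, that $\mathscr{D}(\overline{G})$ is obtainable from $\mathscr{D}(G)$), so I must be careful to phrase everything in terms of deck-computable quantities: connectedness of $\overline{G}$ is recognizable (a classical fact, or deducible since disconnected graphs are recognizable), and $pv(\overline{G},0)$, $pav(\overline{G},0)$ are reconstructible by Theorem~\ref{1} applied to $\mathscr{D}(\overline{G})$. Once these are in hand, the case split is forced and each branch's conclusion follows from the cited lemma, so the remainder is bookkeeping. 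I would present the argument in the itemized form already sketched in the excerpt, making explicit at the end that a reconstruction $H$ of $G\in\mathscr{H}$ shares $\mathscr{D}(G)$, hence the same value of $\mathrm{diam}(\overline{H})$ and the same parameter values, placing $H$ in $\mathscr{H}$ as well.
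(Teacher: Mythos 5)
Your proposal follows essentially the same route as the paper: the same trichotomy on $\mathrm{diam}(\overline{G})$, the same use of Theorem~\ref{t1}, Theorem~\ref{1}, Lemma~\ref{2} and Lemma~\ref{14} in each branch, and the same conclusions. The extra detail you supply — spelling out why the diameter trichotomy is deck-determined via $pv(\overline{G},0)$ and why a reconstruction $H$ lands in the same branch — is a welcome filling-in of steps the paper leaves implicit, but it is not a different argument.
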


\section{Counting Degrees}

 \indent \indent Let $dv(G,k_1,k_2,k_3)$ represent the number of pairs of non-adjacent vertices $x$ and $y$ in $G$ with $k_1$ vertices adjacent to both $x$ and $y,$ $k_2$ vertices adjacent only to $x,$ and $k_3$ vertices adjacent only to $y.$ Let $dav(G,k_1,k_2,k_3)$ represent  the number of pairs of adjacent vertices $x$ and $y$ in $G$ with $k_1$ vertices adjacent to both $x$ and $y,k_2$ vertices adjacent only to $x~(y ~ is ~ excluded ~)$ and $k_3$ vertices adjacent only to $y ~(x ~ is ~ excluded ~).$

\begin{thm}\label{t3}
For a connected graph $G$ on $n$ vertices and $k_2 \neq k_3,$ 
\begin{multline} 
\sum_{i=1}^{n}{dv(G-v_i,k_1,k_2,k_3)}=(n-2-k_1-k_2-k_3)dv(G,k_1,k_2,k_3)\\+(k_1+1)dv(G,k_1+1,k_2,k_3)
+(k_2+1)dv(G,k_1,k_2+1,k_3)+(k_3+1)dv(G,k_1,k_2,k_3+1). 
\end{multline}
\end{thm}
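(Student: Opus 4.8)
## Proof Proposal

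The plan is to count, in two different ways, ordered triples consisting of a deleted vertex $v_i$ together with a non-adjacent pair $\{x,y\}$ in $G - v_i$ realizing the parameter triple $(k_1,k_2,k_3)$. The left-hand side $\sum_{i=1}^n dv(G-v_i,k_1,k_2,k_3)$ is exactly this count organized by which vertex was deleted. The right-hand side will emerge by organizing the same count according to the status, in the original graph $G$, of the non-adjacent pair $\{x,y\}$ and of the deleted vertex $v_i$ relative to it.

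The key observation is the following. Fix a non-adjacent pair $\{x,y\}$ in $G$; let its parameters in $G$ be $(a_1,a_2,a_3)$, meaning $a_1$ common neighbours, $a_2$ vertices adjacent only to $x$, $a_3$ adjacent only to $y$. When we delete a vertex $v_i \notin \{x,y\}$, the pair $\{x,y\}$ survives in $G - v_i$ as a non-adjacent pair, and its parameters change in a predictable way: deleting one of the $a_1$ common neighbours drops $k_1$ by one; deleting one of the $a_2$ exclusive-to-$x$ neighbours drops $k_2$ by one; deleting one of the $a_3$ exclusive-to-$y$ neighbours drops $k_3$ by one; and deleting any of the remaining $n - 2 - a_1 - a_2 - a_3$ vertices (those adjacent to neither $x$ nor $y$, excluding $x,y$ themselves) leaves the parameters unchanged. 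Hence a pair $\{x,y\}$ that realizes $(k_1,k_2,k_3)$ in $G - v_i$ must, in $G$, have had parameters in the set
\[
\{(k_1,k_2,k_3),\ (k_1+1,k_2,k_3),\ (k_1,k_2+1,k_3),\ (k_1,k_2,k_3+1)\},
\]
and the number of choices of $v_i$ producing the shift is respectively $n-2-k_1-k_2-k_3$, $k_1+1$, $k_2+1$, $k_3+1$. Summing over all non-adjacent pairs of $G$ grouped by their parameter triple yields exactly the right-hand side.

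The main subtlety — and the reason for the hypothesis $k_2 \neq k_3$ — is that a pair $\{x,y\}$ is unordered, so when we speak of "$k_2$ vertices adjacent only to $x$ and $k_3$ adjacent only to $y$" we must fix which endpoint is "the first". For a pair with $k_2 = k_3$ there is an ambiguity/double-counting issue (the pair could be counted with the roles of $x$ and $y$ swapped), and the coefficients in the telescoping identity would need correction terms; requiring $k_2 \neq k_3$ makes the labeling of the two endpoints canonical (the endpoint with the smaller private-degree is unambiguously "$x$"), so each unordered pair is counted once on each side and the bookkeeping is clean. I would state this explicitly: under $k_2 \neq k_3$, for every pair appearing on either side the assignment of $x$ and $y$ is forced, and deleting a vertex never creates a $k_2 = k_3$ collision that would have to be excluded from the count on the left — more precisely, if $\{x,y\}$ has parameters $(k_1+1,k_2,k_3)$, $(k_1,k_2+1,k_3)$, or $(k_1,k_2,k_3+1)$ in $G$ with $k_2\ne k_3$, then after the relevant deletion it still has unequal private-degrees $k_2,k_3$ (since only the common-neighbour count, or only one of the two private counts, is decremented, and $k_2 \ne k_3$ is preserved under decrementing one of them by one — this is where one checks $k_2+1 \ne k_3$ and $k_2 \ne k_3+1$ both follow, the latter automatically and the former... needs $k_2 \ne k_3 - 1$, so in fact one should be slightly careful here).

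Concretely the steps are: (1) set up the double count of triples $(v_i, \{x,y\})$ as above; (2) evaluate it one way to get the LHS, essentially by definition; (3) for the other way, fix a non-adjacent pair in $G$, classify the $n-2$ vertices $v_i \ne x,y$ into the four types (common neighbour, private to $x$, private to $y$, adjacent to neither), and record how deletion of each type shifts $(k_1,k_2,k_3)$; (4) invert this to express the contribution to the LHS of pairs having each of the four source triples, obtaining the four weighted terms on the RHS; (5) verify the $k_2\ne k_3$ hypothesis guarantees every pair is labeled canonically and that no pair realizing $(k_1,k_2,k_3)$ in some card fails to correspond to one of the four source triples in $G$. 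I expect step (5) — the careful handling of the unordered-pair labeling and confirming the $k_2 \neq k_3$ condition is exactly what is needed to avoid symmetry corrections — to be the main obstacle; the algebra of the coefficients in steps (3)–(4) is routine telescoping.
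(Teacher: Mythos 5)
Your double-counting scheme is the same as the paper's: classify each non-adjacent pair of $G$ by its parameter triple in $G$, determine for each of the four possible source triples how many vertex deletions shift it to $(k_1,k_2,k_3)$, and sum. Steps (1)--(4) of your outline match the paper's proof essentially verbatim, so there is no methodological divergence to report.

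The loose end you flag in step (5), however, is a genuine gap, and you should not wave it away --- it is precisely the point at which the paper's own argument is also unsound. If $k_3=k_2+1$, the source triple $(k_1,k_2+1,k_3)$ is the symmetric triple $(k_1,k_3,k_3)$. A non-adjacent pair $\{u,v\}$ with $k_1$ common neighbours and $k_3$ private neighbours on \emph{each} side produces the target multiset $\{k_2,k_3\}=\{k_3-1,k_3\}$ upon deletion of any of its $2k_3$ private neighbours, not just the $k_2+1=k_3$ on one side; so such a pair contributes $k_2+1+k_3$ to the left-hand side while the right-hand side credits it with only $(k_2+1)\,dv(G,k_1,k_2+1,k_3)$. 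The identity as stated is therefore short by $k_3\,dv(G,k_1,k_3,k_3)$ whenever $k_3=k_2+1$ (and symmetrically when $k_2=k_3+1$): the hypothesis $k_2\neq k_3$ does not suffice, and one needs either $|k_2-k_3|\geq 2$, or a corrected coefficient $k_2+1+k_3$ on the symmetric term, or the vanishing of $dv(G,k_1,k_3,k_3)$. Your parenthetical ``needs $k_2\neq k_3-1$ \dots one should be slightly careful here'' identifies exactly the right obstruction but then abandons it; a complete proof must resolve it one of these ways. (It is perhaps telling that the paper's later definition of the class $\mathscr{C}_2$ quietly imposes $|k_2-k_3|\geq 2$, even though Theorem~\ref{t3} itself does not.)
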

\begin{proof}
To prove this, we evaluate the left-hand side, $\sum_{i=1}^{n}{dv(G-v_i,k_1,k_2,k_3)},$ by counting how pairs of non-adjacent vertices with specific neighbour properties in $G$ contribute to $dv(G-v_i,k_1,k_2,k_3)$ across all cards $G-v_i.$ Recall that $dv(G,k_1,k_2,k_3)$ denotes the number of non-adjacent vertex pairs $\{u,v\}$ in $G$  with $k_1$ vertices adjacent to both $u$ and $v,$ $k_2$ vertices adjacent only to $u,$ and $k_3$ vertices adjacent only to $v.$ We consider four types of pairs in $G$ and their contributions when a vertex is deleted:
\begin{enumerate}
\item[(1)] Pairs with $(k_1,k_2,k_3)$ in $G$:
 Let $\{u,v\}$ be a pair of non-adjacent vertices in $G$ where $k_1$ vertices adjacent to both $u$ and $v,$ $k_2$ vertices adjacent only to $u,$ and $k_3$ vertices adjacent only to $v.$ In the deck $\mathscr{D},$ consider the cards:

\begin{enumerate}
\item[{\bf \LARGE .}] $G-u$ and $G-v:$ The pair  $\{u,v\}$ is absent (since one vertex is deleted).
\item[{\bf \LARGE .}] Cards $G-w$ where $w$ is one of the $k_1+k_2+k_3$  vertices adjacent to $u$ or $v:$ Deleting $w$ reduces the count of neighbours, so the pair $\{u,v\}$ no longer has exactly $k_1,k_2,k_3$ neighbours in $G-w$, and thus does not contribute to $dv(G-v_i,k_1,k_2,k_3)$. 
\item[{\bf \LARGE .}] Remaining cards: There are $n$ vertices total, and we exclude $u$, $v,$ and the $k_1+k_2+k_3$ neighbours, leaving $n-(2+k_1+k_2+k_3)$ cards ( noting $u$ and $v$ are distinct, and $k_2 \neq k_3$ ensures asymmetry). In each of these, $\{u,v\}$ remains a non-adjacent pair with $k_1,k_2,k_3$ unchanged, contributing 1 to $dv(G-v_i,k_1,k_2,k_3)$. Thus, each such pair contributes to $n-(2+k_1+k_2+k_3)$ cards, and the total contribution is $(n-2-k_1-k_2-k_3)dv(G,k_1,k_2,k_3).$
\end{enumerate}
\item[(2)] Pairs with $(k_1+1,k_2,k_3)$ in $G$:\\
Consider a pair $\{u,v\}$ in $G$ with $k_1+1$ common neighbours, $k_2$ vertices adjacent only to $u,$ and $k_3$ vertices adjacent only to $v.$ In $G-w,$ where $w$ is one of the $k_1+1$ common neighbours:
\begin{enumerate}
\item[{\bf \LARGE .}] Deleting $w$ reduces the common neighbours to $k_1,$ while $k_2$ and $k_3$ remain unchanged, so $\{u,v\}$ becomes a $(k_1,k_2,k_3)$-pair in $G-w,$ contributing 1 to $dv(G-w,k_1,k_2,k_3).$
\item[{\bf \LARGE .}] In the remaining $n-(k_1+1)$ cards, the pair either disappears (if $u$ or $v$ is deleted) or has $k_1+1$ common neighbours, not $k_1.$ There are $k_1+1$ such cards, so the total contribution is $(k_1+1)dv(G, k_1+1,k_2,k_3).$
\end{enumerate}

\item[(3)] Pairs with $(k_1,k_2+1,k_3)$ in $G$:\\
Take a pair $\{u,v\}$ with $k_1$ common neighbours, $k_2+1$ vertices adjacent only to $u,$ and $k_3$ vertices adjacent only to $v,$  consider $G-w$ where $w$ is one of the $k_3+1$ vertices adjacent only to $v:$
\begin{enumerate}
\item[{\bf \LARGE .}] Deleting $w$ reduces the count to $k_2$ vertices adjacent only to $u,$ while $k_1$ and $k_3$ stay the same, making  $\{u,v\}$ a $(k_1,k_2,k_3)$-pair in $G-w.$
\item[{\bf \LARGE .}] In other cards, the pair either vanishes or retains $k_2+1.$ With $k_2+1$ such cards, the contribution is $(k_2+1)dv(G, k_1,k_2+1,k_3).$
\end{enumerate}

\item[(4)] Pairs with $(k_1,k_2,k_3+1)$ in $G$:\\
For a pair $\{u,v\}$ with $k_1$ common neighbours, $k_2$ vertices adjacent only to $u,$ and $k_3+1$ vertices adjacent only to $v:$
\begin{enumerate}
\item[{\bf \LARGE .}] Deleting $w$ adjusts the count to $k_3,$ with $k_1$ and $k_2$ unchanged, so $\{u,v\}$ fits $(k_1,k_2,k_3)$ in $G-w.$
\item[{\bf \LARGE .}] Elsewhere, the pair either disappears or keeps $k_3+1.$ This occurs in $k_3+1$ cards, yielding $(k_3+1)dv(G, k_1,k_2,k_3+1).$
\end{enumerate}
\end{enumerate}
Summing these contributions, no other pairs in $G$ produce $k_1,k_2,k_3$ in $G-v_i$ under single vertex deletion (since $k_2 \neq k_3$ ensures distinct roles), so:
\begin{multline} \sum_{i=1}^{n}{dv(G-v_i,k_1,k_2,k_3)}=(n-2-k_1-k_2-k_3)dv(G,k_1,k_2,k_3)\\+(k_1+1)dv(G,k_1+1,k_2,k_3)
+(k_2+1)dv(G,k_1,k_2+1,k_3)+(k_3+1)dv(G,k_1,k_2,k_3+1). 
\end{multline}

\end{proof}

\begin{coro}\label{c1}
When $k_2=k_3=k,$ the equation from Theorem \ref{t3} simplifies to: 
\begin{multline} 
\sum_{i=1}^{n}{dv(G-v_i,k_1,k,k)}=(n-2-k_1-2k)dv(G,k_1,k,k)+(k_1+1)dv(G,k_1+1,k,k)
+\\(k+1)dv(G,k_1,k,k+1). 
\end{multline}
\end{coro}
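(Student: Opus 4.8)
The plan is to derive Corollary~\ref{c1} as a direct specialization of Theorem~\ref{t3}, handling the one point where the hypothesis $k_2 \neq k_3$ is genuinely used in that theorem's counting argument. First I would set $k_2 = k_3 = k$ formally in the right-hand side of equation~(1): the coefficient $n - 2 - k_1 - k_2 - k_3$ becomes $n - 2 - k_1 - 2k$, the term $(k_1+1)\,dv(G,k_1+1,k_2,k_3)$ becomes $(k_1+1)\,dv(G,k_1+1,k,k)$, and the two separate terms $(k_2+1)\,dv(G,k_1,k_2+1,k_3)$ and $(k_3+1)\,dv(G,k_1,k_2,k_3+1)$ would naively each become $(k+1)\,dv(G,k_1,k+1,k)$ and $(k+1)\,dv(G,k_1,k,k+1)$. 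The key observation is that by the inherent symmetry of the parameter — since the pair $\{u,v\}$ is unordered, $dv(G,k_1,a,b) = dv(G,k_1,b,a)$ for all $a,b$ — these last two quantities coincide, so one does \emph{not} get $2(k+1)\,dv(G,k_1,k,k+1)$; instead the contribution analysis must be redone for the symmetric case.

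The honest route, which I would spell out, is to re-run the four-type decomposition from the proof of Theorem~\ref{t3} with $k_2 = k_3 = k$ and see which types collapse. Types~(1) and~(2) go through verbatim, contributing $(n-2-k_1-2k)\,dv(G,k_1,k,k)$ and $(k_1+1)\,dv(G,k_1+1,k,k)$ respectively. The subtle part is Types~(3) and~(4): a pair $\{u,v\}$ with profile $(k_1, k+1, k)$ and a pair with profile $(k_1, k, k+1)$ are now the \emph{same} unordered pair counted once, so there is no double counting — a $(k_1,k+1,k)$-type pair, upon deleting one of its $k+1$ private-to-$u$ neighbours, yields a $(k_1,k,k)$-pair, and symmetrically deleting one of the $k+1$ private-to-$v$ neighbours of the same pair also yields a $(k_1,k,k)$-pair. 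Thus a single pair of profile $\{k_1; k, k+1\}$ contributes $1$ to exactly $(k+1)$ cards via one of its two "reduction directions," and these two directions are not independent events on distinct pairs but rather the full set of private-neighbour deletions of one pair. Careful bookkeeping shows the combined contribution from what were Types~(3) and~(4) is exactly $(k+1)\,dv(G,k_1,k,k+1)$, matching the stated corollary.

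I expect the main obstacle — really the only nontrivial point — to be articulating precisely why the symmetry does \emph{not} produce an extra factor of $2$, i.e. why the merge of Types~(3) and~(4) leaves the single term $(k+1)\,dv(G,k_1,k,k+1)$ rather than $2(k+1)\,dv(G,k_1,k,k+1)$ or $(k+1)\,dv(G,k_1,k,k+1) + (k+1)\,dv(G,k_1,k+1,k)$. The resolution is that $dv(G,k_1,k+1,k)$ and $dv(G,k_1,k,k+1)$ count the same set of pairs (by unorderedness), so writing it once with the convention $dv(G,k_1,k,k+1)$ is not an omission but a consequence of the counting: each such pair has $2k+1$ asymmetric private neighbours total, of which deleting any of the $(k+1)$ on the "larger" side produces the balanced profile $(k_1,k,k)$ — giving coefficient $(k+1)$, not $2(k+1)$, because deleting one of the $k$ on the "smaller" side instead produces profile $(k_1, k+1, k-1)$, which is not $(k_1,k,k)$. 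Once this is stated, the rest is a clean substitution, and I would present the corollary's proof as: "Substitute $k_2 = k_3 = k$ in Theorem~\ref{t3}; Types~(1)–(2) of the counting are unaffected, while Types~(3) and~(4) refer to the same unordered pairs and, since only deletions of the $k+1$ neighbours on the excess side restore the profile $(k_1,k,k)$, their combined contribution is $(k+1)\,dv(G,k_1,k,k+1)$, yielding~(3)."
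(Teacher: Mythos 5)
Your proposal is correct and follows essentially the same route as the paper: substitute $k_2=k_3=k$ into Theorem~\ref{t3} and observe that the two terms $(k+1)\,dv(G,k_1,k+1,k)$ and $(k+1)\,dv(G,k_1,k,k+1)$ merge into the single term $(k+1)\,dv(G,k_1,k,k+1)$. Your justification of why the coefficient is $(k+1)$ rather than $2(k+1)$ --- namely that for an unordered pair with private-neighbour profile $\{k,k+1\}$ only the $k+1$ deletions on the excess side restore $(k_1,k,k)$, while deletions on the deficient side yield $(k_1,k+1,k-1)$ --- is in fact more precise than the paper's brief remark that the two shifts ``collapse into a single type.''
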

\begin{proof}
Substitute $k_2=k_3=k$ into Theorem \ref{t3}'s equation. The original terms are:
\begin{multline} \sum_{i=1}^{n}{dv(G-v_i,k_1,k_2,k_3)}=(n-2-k_1-k_2-k_3)dv(G,k_1,k_2,k_3)\\+(k_1+1)dv(G,k_1+1,k_2,k_3)
+(k_2+1)dv(G,k_1,k_2+1,k_3)+(k_3+1)dv(G,k_1,k_2,k_3+1). 
\end{multline}

With $k_2=k_3=k,$ this becomes:

\begin{multline} 
\sum_{i=1}^{n}{dv(G-v_i,k_1,k,k)}=(n-2-k_1-k-k)dv(G,k_1,k,k)+(k_1+1)dv(G,k_1+1,k,k)+\\(k+1)dv(G,k_1,k+1,k) +(k+1)dv(G,k_1,k,k+1). 
\end{multline}
Since $k_2$ and $k_3$ are equal, the terms $dv(G,k_1,k+1,k)$ and $dv(G,k_1,k,k+1)$ count symmetric pairs (vertices $u$ and $v$ can swap roles), but in $G-v_i,$ deleting a vertex adjacent only to $u$ or $v$ produces identical shifts. However, Theorem \ref{t3} assumes $k_2 \neq k_3,$ so we adjust by noting that the $k_2+1$ and  $k_3+1$ terms collapse into a single type of shift, yielding one $(k+1)dv(G,k_1,k,k+1)$ term, as the pair's symmetry simplifies the count. Thus, the result holds as stated.

\end{proof}  
\begin{thm}\label{t4}
For a connected graph $G$ with $n$ vertices where $G \notin \mathcal{H},$ the parameters $dv(G,k_1,k_2,k_3)$ for all $0 \leq k_1+k_2+k_3 \leq n-3$ and $dv(G,k_1,k,k)$ for all $0 \leq k_1+2k \leq n-3$ are reconstructible.
\begin{proof}
We prove this for $G \notin \mathcal{H}$ (that is, $\gamma(G)\geq 3$), focusing on two cases: $k_2 \neq k_3$ and $k_2 = k_3=k.$\\ 
Case 1: $k_2 \neq k_3.$ Consider $dv(G,k_1,k_2,k_3)$ where  $k_1+k_2+k_3 \leq n-i$ for $i=3,4,...,n.$  We use induction on $i$ to show these parameters are reconstructible  from the deck of $G:$ 
\begin{enumerate}
\item[{\bf \LARGE .}] Base Case ($i=3$): Let $k_1+k_2+k_3=n-3.$ In Theorem \ref{t3}, the terms  $dv(G,k_1+1,k_2,k_3),~dv(G,k_1,k_2+1,k_3)$ and $dv(G,k_1,k_2,k_3+1)$ have sums $n-2,$ exceeding $n-3,$ so they are zero (no such pairs exist in a graph of $n$ vertices). Thus, $\sum_{i=1}^{n}{dv(G-v_i,k_1,k_2,k_3)}=(n-2-k_1-k_2-k_3)dv(G,k_1,k_2,k_3),$ and since $n-2-(n-3)=1,$\\
$dv(G,k_1,k_2,k_3) =\sum_{i=1}^{n}{dv(G-v_i,k_1,k_2,k_3)},$ which is computable from $\mathscr{D}(G),$ making $dv(G,k_1,k_2,k_3)$ reconstructible for $i=3.$
\item[{\bf \LARGE .}] Inductive Step: Assume for $i=m-1,~dv(G,k_1,k_2,k_3)$ with $k_1+k_2+k_3=n-(m-1)$ is reconstructible. For $i=m,$  let $k_1+k_2+k_3=n-m.$ By the hypothesis, $dv(G,k_1+1,k_2,k_3),~dv(G,k_1,k_2+1,k_3)$ and $dv(G,k_1,k_2,k_3+1)$ (summing to $n-m+1=n-(m-1)$) are known. Using Theorem \ref{t3},\\
$ 
\sum_{i=1}^{n}{dv(G-v_i,k_1,k_2,k_3)}=(n-2-k_1-k_2-k_3)dv(G,k_1,k_2,k_3)+(k_1+1)dv(G,k_1+1,k_2,k_3)+(k_2+1)dv(G,k_1,k_2+1,k_3) +(k_3+1)dv(G,k_1,k_2,k_3+1), 
$\\
the left-hand side is known from $\mathscr{D}(G),$ and the right-hand side's additional terms are given, so $dv(G,k_1,k_2,k_3)$ is solvable (since $n-2-(n-m)=m-2\geq 1$). Thus, it is reconstructible.
\item[{\bf \LARGE .}] By induction, $dv(G,k_1,k_2,k_3)$  is reconstructible for $i=3,4,...,n,$ covering $0\leq k_1+k_2+k_3 \leq n-3.$
\end{enumerate}
Case 2:  $k_1=k_2=k.$ For $dv(G,k_1,k,k)$ with $0 \leq k_1+2k\leq n-3,$ apply Corollary \ref{c1} similarly:
\begin{enumerate}
\item[{\bf \LARGE .}] For $k_1+ 2k=n-3,$ $dv(G,k_1+1,k,k)$ and $dv(G,k_1,k,k+1)$ sum to $n-2,$  so they are zero. Thus,\\
$dv(G,k_1,k,k) =\sum_{i=1}^{n}{dv(G-v_i,k_1,k,k)},$ \\which is reconstructible.
\item[{\bf \LARGE .}] Inductively, if $k_1+ 2k=n-m$ and prior values are known, Corollary \ref{c1} solves for $dv(G,k_1,k,k).$ 
\end{enumerate}
Hence, all such parameters are reconstructible.
\end{proof}
\end{thm}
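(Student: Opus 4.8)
The plan is to prove the two families of parameters $dv(G,k_1,k_2,k_3)$ (with $k_2\neq k_3$) and $dv(G,k_1,k,k)$ are reconstructible by a downward induction on the total $s:=k_1+k_2+k_3$ (equivalently on $i=n-s$), using the recursion of Theorem~\ref{t3} for the asymmetric case and Corollary~\ref{c1} for the symmetric case. The hypothesis $G\notin\mathscr{H}$, i.e. $\gamma(G)\geq 3$, is what guarantees the recursion is actually usable: it ensures every pair of vertices (adjacent or not) sits on some card together with all of their private/common neighbours, so that $dv(G-v_i,k_1,k_2,k_3)$ genuinely records the behaviour described in the proof of Theorem~\ref{t3}; I would open the argument by recording this remark explicitly, since it is the hinge on which the whole section turns.

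First I would set up the induction. For the base case, take $k_1+k_2+k_3=n-3$. Then each of $dv(G,k_1+1,k_2,k_3)$, $dv(G,k_1,k_2+1,k_3)$, $dv(G,k_1,k_2,k_3+1)$ involves three nonnegative integers summing to $n-2$, and for any such triple the corresponding pair $\{x,y\}$ together with its $n-2$ neighbours would need $n$ distinct vertices — impossible once we also recall $x,y$ are not among those neighbours — so all three terms vanish. Theorem~\ref{t3} then collapses to $\sum_{i=1}^n dv(G-v_i,k_1,k_2,k_3)=(n-2-(n-3))\,dv(G,k_1,k_2,k_3)=dv(G,k_1,k_2,k_3)$, and the left side is a sum over the cards, hence reconstructible. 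For the inductive step, suppose all $dv(G,\cdot)$ with coordinate-sum $n-m+1$ are known; given a triple with sum $n-m$, the three "shifted" terms on the right of Theorem~\ref{t3} have coordinate-sum $n-m+1$, so they are known, the left side is known from $\mathscr{D}(G)$, and the coefficient $n-2-(n-m)=m-2$ of the target term is nonzero for $m\geq 3$; dividing through recovers $dv(G,k_1,k_2,k_3)$. Running $m$ from $3$ up to $n$ covers all triples with $0\leq k_1+k_2+k_3\leq n-3$.

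For the symmetric family, I would run the identical induction with Corollary~\ref{c1} in place of Theorem~\ref{t3}: when $k_1+2k=n-3$ the terms $dv(G,k_1+1,k,k)$ and $dv(G,k_1,k,k+1)$ have coordinate-sum $n-2$ and vanish, giving $dv(G,k_1,k,k)=\sum_{i=1}^n dv(G-v_i,k_1,k,k)$; and for $k_1+2k=n-m$ the shifted terms have smaller $i$ and are already known by induction, while the coefficient $n-2-(n-m)=m-2\neq 0$, so one solves for the target. One should be a little careful that the symmetric recursion's coefficient and the two shifted terms really do match Corollary~\ref{c1} as displayed; modulo that bookkeeping the step is mechanical.

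The main obstacle is not the induction — which is routine once the recursions are in hand — but the justification that the left-hand sides $\sum_{i=1}^n dv(G-v_i,k_1,k_2,k_3)$ are computable from the deck, i.e. that $dv(H,k_1,k_2,k_3)$ is a function of the isomorphism type of $H$ for each card $H=G-v_i$. That is immediate since $dv$ is defined purely in terms of adjacency. The subtler point, and the place where I would spend the most care, is confirming that Theorem~\ref{t3}'s count of contributions is exhaustive and non-overlapping under the standing assumption $\gamma(G)\geq 3$: one must check that no pair $\{x,y\}$ of some other "shape" in $G$ can also present as a $(k_1,k_2,k_3)$-pair in a card, and — in the symmetric case — that the collapse of the $k_2+1$ and $k_3+1$ shifts into a single term is handled consistently (this is exactly the delicate sentence at the end of Corollary~\ref{c1}'s proof). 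Once those combinatorial accountings are pinned down, the theorem follows by assembling the two inductions.
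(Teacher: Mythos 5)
Your proposal is correct and follows essentially the same route as the paper: a downward induction on $k_1+k_2+k_3$ (the paper's index $i=n-(k_1+k_2+k_3)$ running from $3$ to $n$), with the base case $k_1+k_2+k_3=n-3$ forcing the shifted terms of Theorem~\ref{t3} (resp.\ Corollary~\ref{c1}) to vanish, and the inductive step solving the recursion for the target term using the nonzero coefficient $m-2$. Your added remarks on why the left-hand sides are deck-computable and on the exhaustiveness of the contribution count are sensible sanity checks but do not change the argument.
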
 

\begin{thm}\label{t5}
For a connected graph $G$ with $n$ vertices and $k_2 \neq k_3,$ the following holds: 
\begin{multline} 
\sum_{i=1}^{n}{dav(G-v_i,k_1,k_2,k_3)}=(n-2-k_1-k_2-k_3)dav(G,k_1,k_2,k_3)\\+(k_1+1)dav(G,k_1+1,k_2,k_3)
+(k_2+1)dav(G,k_1,k_2+1,k_3)+(k_3+1)dav(G,k_1,k_2,k_3+1). 
\end{multline}
\end{thm}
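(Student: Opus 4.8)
The plan is to mirror, essentially verbatim, the double-counting argument already carried out for $dv$ in Theorem \ref{t3}, but now tracking adjacent pairs instead of non-adjacent ones. I would evaluate the left-hand side $\sum_{i=1}^{n} dav(G-v_i,k_1,k_2,k_3)$ by asking, for each ordered contribution, which pairs $\{u,v\}$ of $G$ register as an adjacent triple-$(k_1,k_2,k_3)$ pair in some card $G-v_i$. The crucial observation is that deleting a vertex $v_i$ never destroys or creates the edge $uv$ itself unless $v_i\in\{u,v\}$; hence an adjacent pair in $G$ stays adjacent in every card that retains both endpoints, and an adjacent pair in a card comes from an adjacent pair in $G$. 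So the bookkeeping is structurally identical to the $dv$ case, with ``non-adjacent'' replaced by ``adjacent'' and the sets $X_u,X_v,X_{uv}$ now referring to the private/common neighbourhoods of an adjacent pair.

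The key steps, in order: (1) Fix $\{u,v\}$ with $uv\in E(G)$ having exactly $k_1$ common neighbours, $k_2$ neighbours private to $u$ (excluding $v$), $k_3$ private to $v$ (excluding $u$). Deleting $u$ or $v$ removes the pair; deleting any of the $k_1+k_2+k_3$ neighbours changes one of the three counts; deleting any of the remaining $n-2-(k_1+k_2+k_3)$ vertices leaves an adjacent $(k_1,k_2,k_3)$-pair. This yields the coefficient $(n-2-k_1-k_2-k_3)$ on $dav(G,k_1,k_2,k_3)$. (2) A pair with parameters $(k_1+1,k_2,k_3)$ in $G$ becomes a $(k_1,k_2,k_3)$-pair in exactly the $k_1+1$ cards where a common neighbour is deleted, giving $(k_1+1)dav(G,k_1+1,k_2,k_3)$. (3)–(4) Similarly, pairs with $(k_1,k_2+1,k_3)$ and $(k_1,k_2,k_3+1)$ contribute $(k_2+1)dav(G,k_1,k_2+1,k_3)$ and $(k_3+1)dav(G,k_1,k_2,k_3+1)$ respectively, via deletion of the relevant private neighbour. (5) Confirm these four families exhaust all ways a card can show an adjacent $(k_1,k_2,k_3)$-pair, invoke $k_2\neq k_3$ to rule out cross-contamination between the $k_2$- and $k_3$-roles, and sum.

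One point that needs a little care, and which I expect to be the main (if minor) obstacle, is verifying that the private-neighbour counts are defined with $v$ excluded from $N(u)$ and $u$ excluded from $N(v)$, so that deleting $u$ itself does not spuriously shift $k_3$ for the pair — but since deleting $u$ removes the pair entirely this is moot; the real subtlety is that when we delete a common neighbour $w$ of an adjacent pair, we must be sure $w\neq u,v$, which holds since $w$ is adjacent to both and the graph is simple. I would also note explicitly that the total vertex count decomposes as $n = 2 + k_1 + k_2 + k_3 + (\text{rest})$ for each such pair, so the ``rest'' count $n-2-k_1-k_2-k_3$ is automatically nonnegative whenever the pair exists, keeping the identity meaningful. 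With these checks in place the displayed equation follows by collecting the four contributions, exactly paralleling the conclusion of Theorem \ref{t3}.
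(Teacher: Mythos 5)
Your proposal is correct and follows essentially the same four-family double-counting argument as the paper's own proof, which explicitly adapts the Theorem \ref{t3} bookkeeping to adjacent pairs. Your added remarks (that deleting a vertex other than $u,v$ cannot affect the edge $uv$, and that a common neighbour $w$ is necessarily distinct from $u$ and $v$) are minor clarifications consistent with, and if anything slightly more careful than, the paper's write-up.
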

\begin{proof}

We compute the sum \( \sum_{i=1}^n dav(G - v_i, k_1, k_2, k_3) \), where \( dav(G, k_1, k_2, k_3) \) counts pairs of adjacent vertices \( \{u, v\} \) in \( G \) with \( k_1 \) vertices adjacent to both, \( k_2 \) adjacent only to \( u \), and \( k_3 \) adjacent only to \( v \). The argument parallels Theorem 10, adapted for adjacent pairs:

\begin{enumerate}
\item[\textbf{1.}] \textbf{Pairs with \( (k_1, k_2, k_3) \) in \( G \):}\\
    Take an adjacent pair \( \{u, v\} \) in \( G \) with \( k_1 \) common neighbours, \( k_2 \) vertices adjacent only to \( u \), and \( k_3 \) adjacent only to \( v \). In the deck:
    \begin{itemize}
        \item In \( G - u \) and \( G - v \), the pair \( \{u, v\} \) is absent.
        \item In \( G - w \) where \( w \) is one of the \( k_1 + k_2 + k_3 \) neighbours, deleting \( w \) alters the counts (e.g., a common neighbour's removal makes \( u \) and \( v \) non-adjacent or changes \( k_1 \)), so \( \{u, v\} \) does not contribute to \( dav(G - w, k_1, k_2, k_3) \).
        \item In the remaining \( n - 2 - k_1 - k_2 - k_3 \) cards (excluding \( u, v \), and their neighbours), \( \{u, v\} \) remains adjacent with unchanged \( k_1, k_2, k_3 \), contributing 1 each time.
    \end{itemize}
    Total: \( (n - 2 - k_1 - k_2 - k_3) dav(G, k_1, k_2, k_3) \).

    \item[\textbf{2.}] \textbf{Pairs with \( (k_1 + 1, k_2, k_3) \) in \( G \):} \\
    For an adjacent pair \( \{u, v\} \) with \( k_1 + 1 \) common neighbours, \( k_2 \) only to \( u \), and \( k_3 \) only to \( v \), deleting one of the \( k_1 + 1 \) common neighbours reduces it to \( k_1 \), keeping \( u \) and \( v \) adjacent, contributing to \( dav(G - w, k_1, k_2, k_3) \) in \( k_1 + 1 \) cards. \\
    Total: \( (k_1 + 1) dav(G, k_1 + 1, k_2, k_3) \).

    \item[\textbf{3.}] \textbf{Pairs with \( (k_1, k_2 + 1, k_3) \) in \( G \):} \\
    An adjacent pair with \( k_1 \) common neighbours, \( k_2 + 1 \) only to \( u \), and \( k_3 \) only to \( v \) contributes in \( G - w \) when \( w \) is one of the \( k_2 + 1 \) vertices, reducing to \( k_2 \), across \( k_2 + 1 \) cards. \\
    Total: \( (k_2 + 1) dav(G, k_1, k_2 + 1, k_3) \).

    \item[\textbf{4.}] \textbf{Pairs with \( (k_1, k_2, k_3 + 1) \) in \( G \):} \\
    Similarly, a pair with \( k_1, k_2, k_3 + 1 \) contributes in \( k_3 + 1 \) cards when a vertex adjacent only to \( v \) is deleted, yielding \( (k_3 + 1) dav(G, k_1, k_2, k_3 + 1) \).
\end{enumerate}

Summing these, with \( k_2 \neq k_3 \) ensuring distinct contributions, gives the stated equation.
\end{proof}

\begin{coro}\label{cor14}
When $k_2=k_3=k,$ Theorem \ref{t5} reduces to:
\begin{multline} 
\sum_{i=1}^{n}{dav(G-v_i,k_1,k,k)}=(n-2-k_1-2k)dav(G,k_1,k,k)+(k_1+1)dav(G,k_1+1,k,k)
+\\(k+1)dav(G,k_1,k,k+1) 
\end{multline}

\begin{proof} 

Substitute \( k_2 = k_3 = k \) into Theorem \ref{t5}:
\begin{multline}
\sum_{i=1}^n dav(G - v_i, k_1, k, k) = (n - 2 - k_1 - 2k) dav(G, k_1, k, k) + (k_1 + 1) dav(G, k_1 + 1, k, k)+ \\(k + 1) dav(G, k_1, k + 1, k) + (k + 1) dav(G, k_1, k, k + 1).
\end{multline}
Since \( u \) and \( v \) are adjacent and symmetric when \( k_2 = k_3 \), the terms \( dav(G, k_1, k + 1, k) \) and \( dav(G, k_1, k, k + 1) \) represent the same shift (deleting a vertex adjacent only to one reduces the count to \( k \)), collapsing into a single \( (k + 1) dav(G, k_1, k, k + 1) \) term, consistent with the pattern in Corollary 11 for \( dv \).
\end{proof}
\end{coro}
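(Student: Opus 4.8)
The plan is not to substitute $k_2=k_3=k$ mechanically into the identity of Theorem \ref{t5} — doing so would produce the spurious pair of terms $(k+1)\,dav(G,k_1,k+1,k)+(k+1)\,dav(G,k_1,k,k+1)$ — but rather to re-run the card-counting argument of Theorem \ref{t5} from the outset with the constraint $k_2=k_3=k$ imposed. As in that proof, I would fix the target configuration $(k_1,k,k)$ and classify every adjacent pair $\{u,v\}$ of $G$ according to how deleting a single vertex $w$ can turn $\{u,v\}$ into a $(k_1,k,k)$-pair of $G-w$. The preliminary observations are that deleting any $w\notin\{u,v\}$ leaves the edge $uv$ intact, so an adjacent pair stays adjacent and no contribution ever leaks into the non-adjacent count $dv$; and that such a deletion either leaves all three of $(k_1,\mathrm{priv}(u),\mathrm{priv}(v))$ unchanged or decreases exactly one of them by exactly one, according to whether $w$ is adjacent to both, to exactly one, or to neither of $u,v$. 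Consequently the only pairs of $G$ that can contribute to $dav(G-w,k_1,k,k)$ are those with configuration $(k_1,k,k)$, configuration $(k_1+1,k,k)$, or with $k_1$ common neighbours and exclusive-count multiset $\{k+1,k\}$.

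The first two types are handled exactly as in Theorem \ref{t5}: a $(k_1,k,k)$-pair contributes $1$ for each of the $n-2-k_1-2k$ vertices adjacent to neither endpoint, giving $(n-2-k_1-2k)\,dav(G,k_1,k,k)$; and a $(k_1+1,k,k)$-pair contributes $1$ for each of its $k_1+1$ common neighbours, giving $(k_1+1)\,dav(G,k_1+1,k,k)$. The new point is the third type. A pair $\{u,v\}$ with $k_1$ common neighbours and exclusive-count multiset $\{k+1,k\}$ has a \emph{unique} heavy endpoint — the one with $k+1$ private neighbours — precisely because $k+1\neq k$; deleting one of that endpoint's $k+1$ private neighbours yields a $(k_1,k,k)$-pair, whereas deleting one of the $k$ private neighbours of the light endpoint yields multiset $\{k+1,k-1\}\neq\{k,k\}$. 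Hence each such pair contributes exactly $k+1$ times, the number of such pairs is $dav(G,k_1,k+1,k)=dav(G,k_1,k,k+1)$ (the equality holding since $\{u,v\}$ is unordered), and summing the three contributions delivers the stated identity with the single last term $(k+1)\,dav(G,k_1,k,k+1)$.

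The step I expect to be the main obstacle — really the only subtle point — is justifying this collapse of cases and resisting the factor-of-two error. In Theorem \ref{t5} the analogous ``type 3'' and ``type 4'' classes, the $(k_1,k_2+1,k_3)$- and $(k_1,k_2,k_3+1)$-pairs, are genuinely distinct because $k_2\neq k_3$ lets one distinguish the role of $u$ from that of $v$; when $k_2=k_3=k$ these two classes coincide — both are the multiset-$\{k+1,k\}$ class — so they must be counted once, not twice, which is exactly why the coefficient of the last term is $(k+1)$ and not $2(k+1)$. I would also spell out that no configuration outside the three listed can reach $(k_1,k,k)$ under one deletion (a single deletion moves one coordinate by at most one), and close by noting that this parallels the derivation of Corollary \ref{c1} for $dv$, so the two corollaries are proved by the same mechanism.
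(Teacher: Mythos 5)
Your proof is correct, and it takes a genuinely different route from the paper's. The paper's proof literally substitutes $k_2=k_3=k$ into the identity of Theorem \ref{t5} and then asserts that the two resulting terms $(k+1)\,dav(G,k_1,k+1,k)$ and $(k+1)\,dav(G,k_1,k,k+1)$ ``collapse'' into one; this is uncomfortable because Theorem \ref{t5} is stated only under the hypothesis $k_2\neq k_3$, so the substitution is formally outside its range of validity, and the collapse itself is justified only by an appeal to symmetry. You instead re-run the card-counting argument from scratch with $k_2=k_3=k$ imposed: you observe that a single deletion of $w\notin\{u,v\}$ preserves adjacency and moves at most one of the three coordinates down by one, so only pairs of type $(k_1,k,k)$, $(k_1+1,k,k)$, or $k_1$ common neighbours with exclusive-count multiset $\{k+1,k\}$ can contribute; and for the third type you pin down exactly why the contribution is $k+1$ per pair (only deletions of the heavy endpoint's private neighbours land on $\{k,k\}$, since $\{k+1,k-1\}\neq\{k,k\}$) and why the class is counted once rather than twice (the two ordered labellings $(k+1,k)$ and $(k,k+1)$ describe the same unordered pair, so $dav(G,k_1,k+1,k)=dav(G,k_1,k,k+1)$ is a single quantity, not two). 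This buys a self-contained derivation that never invokes an identity outside its stated hypothesis and makes the absence of the factor $2$ a theorem rather than a remark; the paper's version buys brevity by reusing Theorem \ref{t5}, at the cost of the logical gap you correctly identified. The one cosmetic slip is that your list ``adjacent to both, to exactly one, or to neither'' is ordered inconsistently with the preceding clause ``unchanged or decreases exactly one,'' but the intended correspondence is unambiguous.
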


\begin{thm}\label{t6}
For a connected graph $G$ with $n$ vertices and $G \notin \mathcal{H},$ the parameters $dav(G,k_1,k_2,k_3)$ for all $0 \leq k_1+k_2+k_3 \leq n-3$ and $dav(G,k_1,k,k)$ for all $0 \leq k_1+k+k \leq n-3$ are reconstructible.
\end{thm}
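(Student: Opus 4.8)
The plan is to run the argument of Theorem~\ref{t4} essentially verbatim, with the adjacent-pair recursions of Theorem~\ref{t5} and Corollary~\ref{cor14} playing the roles of Theorem~\ref{t3} and Corollary~\ref{c1}. Two preliminary observations drive everything. First, for each card $G-v_i$ the number $dav(G-v_i,k_1,k_2,k_3)$ depends only on the (unlabelled) isomorphism type of that card, so $\sum_{i=1}^{n} dav(G-v_i,k_1,k_2,k_3)$ is determined by $\mathscr{D}(G)$; since $n=|\mathscr{D}(G)|$ is also known, the only quantities appearing in the identities of Theorem~\ref{t5} and Corollary~\ref{cor14} that are not yet known are the $dav(G,\cdot,\cdot,\cdot)$ terms on their right-hand sides. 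Second, the hypothesis $G\notin\mathcal{H}$, i.e.\ $\gamma(G)\ge 3$, is precisely what anchors the induction: if $\{u,v\}$ were an adjacent pair with $k_1+k_2+k_3=n-2$, then every vertex other than $u$ and $v$ would be adjacent to $u$ or to $v$, making $\{u,v\}$ a dominating set and forcing $\gamma(G)\le 2$, a contradiction; hence $dav(G,k_1,k_2,k_3)=0$ whenever $k_1+k_2+k_3\ge n-2$.

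For the case $k_2\neq k_3$ I would induct on $i$, writing $k_1+k_2+k_3=n-i$ for $i=3,4,\dots,n$. At stage $i$ the three shifted terms $dav(G,k_1+1,k_2,k_3)$, $dav(G,k_1,k_2+1,k_3)$, $dav(G,k_1,k_2,k_3+1)$ occurring in Theorem~\ref{t5} all have index sum $n-i+1$: for $i=3$ this equals $n-2$, so these terms vanish by the remark above; for $i\ge 4$ this is at most $n-3$, so these terms were already reconstructed at stage $i-1$. In either case the right-hand side of Theorem~\ref{t5} is known apart from $dav(G,k_1,k_2,k_3)$, whose coefficient $n-2-(k_1+k_2+k_3)=i-2\ge 1$ is nonzero, so $dav(G,k_1,k_2,k_3)$ is recovered. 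Letting $i$ run over $3,\dots,n$ covers all triples with $0\le k_1+k_2+k_3\le n-3$ and $k_2\neq k_3$.

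For the case $k_2=k_3=k$ I would carry out the identical induction on $i$ with $k_1+2k=n-i$, using Corollary~\ref{cor14} instead: its shifted terms $dav(G,k_1+1,k,k)$ (again of the symmetric type) and $dav(G,k_1,k,k+1)$ (of the $k_2\neq k_3$ type handled above) have index sum $n-i+1$, hence are $0$ when $i=3$ and known from stage $i-1$ when $i\ge 4$, while the coefficient $n-2-k_1-2k=i-2\ge 1$ is again nonzero. Because each recursion only ever refers a stage-$i$ unknown back to stage-$(i-1)$ quantities, and such a stage-$(i-1)$ quantity can be of either the symmetric or the asymmetric form, the two cases must be run in tandem: at each stage $i$ one establishes simultaneously all $dav(G,k_1,k_2,k_3)$ with $k_1+k_2+k_3=n-i$, of both types, from the stage-$(i-1)$ values of both types. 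This yields the claimed range $0\le k_1+k_2+k_3\le n-3$ and $0\le k_1+2k\le n-3$.

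The bulk of the substance lies in Theorem~\ref{t5} and Corollary~\ref{cor14}, which I am allowed to assume; for the present theorem the only points that need genuine care are the two flagged above, namely the use of $\gamma(G)\ge 3$ to annihilate the index-sum-$(n-2)$ parameters that start the induction, and the verification that the symmetric and asymmetric recursions interlock so that one induction on the index sum reconstructs every parameter in range. The part I expect to be most delicate — but it is inherited from Corollary~\ref{cor14} rather than redone here — is the bookkeeping by which, when $k_2=k_3$, the two shifted contributions collapse into a single term instead of being double counted; pinning down precisely how $dav$ tallies an adjacent pair whose two vertices have unequal numbers of exclusive neighbours is what makes that collapse legitimate.
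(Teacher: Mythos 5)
Your proposal follows essentially the same route as the paper: induction on the index sum $k_1+k_2+k_3=n-i$ using the recursions of Theorem~\ref{t5} and Corollary~\ref{cor14}, with the shifted terms vanishing at the base case and known from the previous stage thereafter. In fact you are slightly more careful than the paper on two points it leaves implicit --- the explicit use of $\gamma(G)\geq 3$ to kill the parameters with index sum $n-2$ (an adjacent pair realizing that sum would be a dominating set), and the observation that the symmetric and asymmetric recursions must be run in tandem at each stage --- so the argument is sound.
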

\begin{proof}
We extend Theorem \ref{t4}'s approach to \( dav \), assuming \( G \notin \mathcal{H} \) (\( \gamma(G) \geq 3 \)):

\noindent\textbf{Case 1: \( k_2 \neq k_3 \).}\\
 Prove by induction on \( i \) where \( k_1 + k_2 + k_3 = n - i \), \( i = 3, 4, \ldots, n\):\\
\textit{Base Case} (\( i = 3 \)): 
If \( k_1 + k_2 + k_3 = n - 3 \), then \( dav(G, k_1 + 1, k_2, k_3) \), \( dav(G, k_1, k_2 + 1, k_3) \), and \( dav(G, k_1, k_2, k_3 + 1) \) sum to \( n-2 \), so they are zero. From Theorem \ref{t5},
    \[
    dav(G, k_1, k_2, k_3) = \sum_{i=1}^n dav(G - v_i, k_1, k_2, k_3),
    \]
    computable from \( \mathscr{D}(G) \).\\
 \textit{Inductive Step:} Assume \( dav(G, k_1, k_2, k_3) \) with \( k_1 + k_2 + k_3 = n - (m - 1) \) is reconstructible. For \( i = m \), \( k_1 + k_2 + k_3 = n - m \), the higher terms (summing to \( n - m + 1 \)) are known, and Theorem 13 solves for \( dav(G, k_1, k_2, k_3) \) from \( \mathscr{D}(G) \), as \( n - 2 - (n - m) = m - 2 \geq 1 \).

Thus, \( dav(G, k_1, k_2, k_3) \) is reconstructible up to \( n\).

\noindent\textbf{Case 2: \( k_2 = k_3 = k \).} \\
For \( k_1 + 2k = n - i \), \( i = 3, 4, \ldots, n \), use Corollary \ref{cor14}:

\begin{itemize}
        \item At \( i = 3 \), \( k_1 + 2k = n - 3 \), higher terms are zero, and \( dav(G, k_1, k, k) \) is directly computable.
        \item Inductively, known values from prior steps solve via Corollary \ref{cor14}.
 \end{itemize}

Hence, all specified \( dav \) parameters are reconstructible.
\end{proof}

\begin{rem}
The parameters $dv(G,k_1,k_2,k_3)$ and $dav(G,k_1,k_2,k_3)$ allow determination of vertex pair degrees:
\begin{itemize}
\item For non-adjacent vertices $\{u, v\},$ the degrees are $deg~u=k_1+k_2$ (common plus exclusive to $u$) and $deg ~v=k_1+k_3$ common plus exclusive to $v$).

\item For adjacent vertices $\{u, v\},$  since they are neighbours, the degrees are $deg~ u=k_1+k_2+1$ and  $deg~v=k_1+k_3+1,$ where the $+1$ accounts for their mutual adjacency. 
  \end{itemize} 
\end{rem}

\section{{ Reconstruction of $k$-Geodetic Graphs of diameter two}}

\indent \indent { A graph $G$ is \textit{$k$-geodetic} if each pair of vertices has at most $k$ paths of minimum length between them.   Let $\mathcal{G}$ be the collection of all $k$-geodetic graphs $G$ with $dv(G,0,k_2,k_3)=0$ and $dv(\overline{G},0,k_2,k_3)=0$ for all $k_2,k_3 $ where $0 \leq k_2+k_3 \leq n-3.$ Clearly, $\mathrm{diam}(G)=\mathrm{diam}(\overline{G})=2$ for $G\in\mathcal{G}.$  In this section, we prove that certain subfamily of graphs in  $\mathcal{G}$  is reconstructible. Throughout this section, we assume that the domination number of $G$ and $\overline{G}$ are at least three.} Let $\mathscr{C}_1$ be the collection of graphs $G \in \mathcal{G}$ such that $dv(G,k_1,k_2,k_3) >0$ and $dv(G,k_{1}-1,k_{2},k_{3}) =0$ for some $k_1 \geq 1, k_2$ and $k_3.$  Let $\mathscr{C}_2$ be the collection of graphs $G \in \mathcal{G}$ such that $dv(G,k_1,k_2,k_3) >0,$ and either $dv(G,k_{1},k_{2}-1,k_{3}) =0$ for some $k_1, k_2 \geq 1,$ $k_3 $ with $|k_2-k_3| \geq 2$ or $dv(G,k_{1},k_{2},k_{3}-1) =0$ for some $k_1, k_2 ,$ $k_3 \geq 1$ with $|k_2-k_3| \geq 2.$ For given $k_1, k_2, k_3 \in \mathbb{N}\cup\{0\}$, we denote by $S_G(k_1, k_2, k_3)$ the set of all vertices $v \in V(G)$ such that there exists a non-neighbour $w \in V(G) \setminus N(v)$ with the following properties: there are exactly $k_1$ common neighbours of $v$ and $w$ (that is, vertices adjacent to both $v$ and $w$), exactly $k_2$ vertices adjacent to $v$ but not to $w$, and exactly $k_3$ vertices adjacent to $w$ but not to $v$. 

\begin{thm}\label{t7}
If $G \in \mathscr{C}_1$ and there exists a vertex $x \in G$ such that $deg_{G}x=|S_{G-x}(k_1-1,k_2,k_3)|,$ then $G$ is reconstructible.
\begin{proof}
\textbf{Recognition:} Since $\mathscr{H}$ is recognizable, we can assume that $G \notin \mathscr{H}$ and that $dv(G,k_1,k_2,k_3)$ is reconstructible for all $0 \leq k_1+k_2+k_3 \leq n-3$ (Theorem \ref{t3}). Suppose $G \in \mathscr{C}_{1}.$ Select a card $G-x$ from $\mathscr{D}(G)$ where $deg_{G}x= |S_{G-x}(k_1,k_2,k_3)|.$ Since $deg_{G}x$ and the  subset $S_{G-x}(k_1,k_2,k_3)$ can be derived from $\mathscr{D}(G),$ this card is identifiable.\\
\textbf{Weak Reconstruction:} Consider the card  $G-x$ with  $deg_{G}x= |S_{G-x}(k_1-1,k_2,k_3)|.$ As $G \in \mathscr{C}_{1},$  $dv(G,k_1-1,k_2,k_3)=0,$ but in $G-x,$ $dv(G-x,k_1-1,k_2,k_3)>0$ because $x$'s  removal shifts the count of common neighbours. To reconstruct $G,$ introduce a vertex $x$ to $G-x$ and connect it to all vertices in $S_{G-x}(k_1-1,k_2,k_3).$ This uniquely restores $G,$ as no other configuration satisfies the condition.
\end{proof}
\end{thm}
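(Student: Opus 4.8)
The plan is the standard two-stage reconstruction argument: first show from $\mathscr{D}(G)$ alone that $G$ lies in the class described by the hypotheses and that a witnessing ``special'' card can be singled out, then show that re-attaching the deleted vertex in the prescribed manner recovers $G$ uniquely. Together these give that every graph with the same deck as $G$ is isomorphic to $G$. Throughout I use $n\geq 12$ and, since $\mathscr{H}$ is recognizable (Theorem \ref{HR}) and $\gamma(G),\gamma(\overline{G})\geq 3$, the fact that $G\notin\mathscr{H}$, so by Theorem \ref{t4} all parameters $dv(G,k_1,k_2,k_3)$ are reconstructible, and, passing to $\mathscr{D}(\overline{G})$, so are those of $\overline{G}$.

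\emph{Recognition.} The conditions defining $\mathcal{G}$ (namely $dv(G,0,\cdot,\cdot)=0$, $dv(\overline{G},0,\cdot,\cdot)=0$, and the $k$-geodetic bound, which for a diameter-two graph just says $dv(G,k_1,\cdot,\cdot)=0$ for $k_1>k$) and those defining $\mathscr{C}_1$ (existence of a triple with $dv(G,k_1,k_2,k_3)>0$ and $dv(G,k_1-1,k_2,k_3)=0$) are all expressible through the reconstructible parameters, so they hold in $G$ iff they hold in every reconstruction; fix a canonical witnessing triple $(k_1,k_2,k_3)$. Moreover $|E(G)|$ is reconstructible, so for any card $C$ the degree of the vertex it misses equals $|E(G)|-|E(C)|$, while $S_C(k_1-1,k_2,k_3)$ is computable from $C$; hence ``$C$ is a special card'', meaning the missing degree equals $|S_C(k_1-1,k_2,k_3)|$, is a property of $C$ and the deck only. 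Consequently every reconstruction $H$ of $G$ satisfies all these hypotheses with the same triple, and $G$ and $H$ share at least one special card.

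\emph{Reconstruction step and key claim.} Everything reduces to the claim that for the special vertex $x$ one has $N_G(x)=S_{G-x}(k_1-1,k_2,k_3)$. Granting it, $G$ is the graph obtained from the special card $C=G-x$ by adding one new vertex adjacent precisely to $S_C(k_1-1,k_2,k_3)$ — a recipe depending only on $C$ and the deck; applying the claim inside a reconstruction $H$ to a vertex $z$ with $H-z=C$ (such $z$ exists since $C\in\mathscr{D}(H)$) and $\deg_H z=|E(H)|-|E(C)|=|E(G)|-|E(C)|=\deg_G x=|S_C(k_1-1,k_2,k_3)|$, so that $z$ is special in $H$, yields that $H$ too is $C$ with a new vertex adjacent exactly to $S_C(k_1-1,k_2,k_3)$, whence $H\cong G$. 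To prove the claim, note that by hypothesis $|N_G(x)|=|S_{G-x}(k_1-1,k_2,k_3)|$, so it suffices to prove one inclusion, say $S_{G-x}(k_1-1,k_2,k_3)\subseteq N_G(x)$. If $v$ lies in the left set with witness $w\notin N_{G-x}(v)$ realizing the $(k_1-1,k_2,k_3)$-profile in $G-x$, then $\{v,w\}$ is non-adjacent already in $G$, and one classifies the profile of $\{v,w\}$ in $G$ by how $x$ is attached to $\{v,w\}$: attached to neither forces $\{v,w\}$ to be a $(k_1-1,k_2,k_3)$-pair of $G$, contradicting $dv(G,k_1-1,k_2,k_3)=0$; $x$ a common neighbour or $x$ private to $v$ both give $v\in N_G(x)$ at once.

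\emph{Main obstacle.} The remaining case is the genuine difficulty: a vertex $v\notin N_G(x)$ can in principle enter $S_{G-x}(k_1-1,k_2,k_3)$ through a partner $w$ such that $\{v,w\}$ has profile $(k_1-1,k_2,k_3+1)$ in $G$ with $x$ adjacent to $w$ but not to $v$, since deleting $x$ then trims $w$'s private degree from $k_3+1$ to $k_3$. Neither $dv(G,k_1-1,k_2,k_3)=0$ nor the cardinality equation $|N_G(x)|=|S_{G-x}(k_1-1,k_2,k_3)|$ by itself excludes such a ``contaminating'' pair. I would attack this by (i) refining the witnessing triple, e.g.\ taking $(k_1,k_2,k_3)$ lexicographically minimal in $(k_1,k_3)$ among triples with $dv(G,k_1,k_2,k_3)>0$ and $dv(G,k_1-1,k_2,k_3)=0$, aiming also to force $dv(G,k_1-1,k_2,k_3+1)=0$, which would kill the contaminating family outright; and, if that is not always available, (ii) exploiting the ambient structure of $\mathcal{G}$ (every non-adjacent pair has between $1$ and $k$ common neighbours, and $\operatorname{diam}(\overline{G})=2$) to show that a contaminating pair is incompatible with $x$ realizing the exact degree equality demanded by the hypothesis. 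Once $N_G(x)=S_{G-x}(k_1-1,k_2,k_3)$ is secured, the rest is the routine re-attachment argument above.
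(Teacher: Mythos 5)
Your proposal follows the same two-stage skeleton as the paper's proof (recognize the class and the special card from the reconstructible parameters $dv$, then re-attach the deleted vertex to every vertex of $S_{G-x}(k_1-1,k_2,k_3)$), but you go one step further than the paper: you correctly isolate the statement that actually has to be proved, namely $N_G(x)=S_{G-x}(k_1-1,k_2,k_3)$, and you attempt the inclusion $S_{G-x}(k_1-1,k_2,k_3)\subseteq N_G(x)$ by classifying how $x$ attaches to a witnessing non-adjacent pair $\{v,w\}$. The paper never carries out this analysis; its entire justification of uniqueness is the sentence ``no other configuration satisfies the condition.''

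The obstacle you flag is a genuine gap, and it is not closed by your argument (nor by the paper's). If $x$ is adjacent to the witness $w$ but not to $v$, then $\{v,w\}$ has profile $(k_1-1,k_2,k_3+1)$ in $G$ and profile $(k_1-1,k_2,k_3)$ in $G-x$, so $v$ enters $S_{G-x}(k_1-1,k_2,k_3)$ without being a neighbour of $x$. Membership in $\mathscr{C}_1$ only guarantees $dv(G,k_1-1,k_2,k_3)=0$; it says nothing about $dv(G,k_1-1,k_2,k_3+1)$, so such contaminating pairs can exist, and the cardinality hypothesis $\deg_G x=|S_{G-x}(k_1-1,k_2,k_3)|$ cannot upgrade ``equal sizes'' to ``equal sets'' once one inclusion fails. (A symmetric contamination through pairs of profile $(k_1-1,k_2+1,k_3)$ with $x$ private to $w$ in the role of the first coordinate of $S$ should also be checked.) Your proposed repairs --- choosing a lexicographically minimal witnessing triple so that $dv(G,k_1-1,k_2,k_3+1)=0$ as well, or exploiting the $k$-geodetic, diameter-two structure of $\mathcal{G}$ --- are plausible directions but remain sketches, so your proof is incomplete as written. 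Be aware, though, that the published proof has exactly the same hole and simply asserts the conclusion; your version is the more honest account of where the difficulty actually lies.
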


\begin{thm}\label{t8}
If $G \in \mathscr{C}_2$ and there exists a vertex $x \in G$ such that $deg_{G}x=|S_{G-x}(k_1,k_2-1,k_3)|$ or $deg_{G}x=|S_{G-x}(k_1,k_2,k_3-1)|,$ then $G$ is reconstructible.
\begin{proof}
\textbf{Recognition:}\\ Assume  $G \notin \mathscr{H},$ where $\mathscr{H}$ is a recognizable class. The parameters $dv(G,k_1,k_2,k_3),$ which count non-adjacent vertex pairs with $k_1$ common neighbours, $k_2$ neighbours exclusive to one, and $k_3$ to the other, are reconstructible  from  $\mathscr{D}(G)$ for all $0 \leq k_1+k_2+k_3 \leq n-3$ (as established in Theorem \ref{t6}). Since $G \in \mathscr{C}_2,$ we have $dv(G,k_1,k_2,k_3)>0,$ but either $dv(G,k_1,k_2-1,k_3)=0$ or $dv(G,k_1,k_2,k_3-1)=0$ for some $k_2 \geq 1$ or $k_3 \geq 1$ with $|k_2-k_3|\geq 2.$   Now, select a card $G-x$ from $\mathscr{D}(G),$ where $deg_{G}x= |S_{G-x}(k_1,k_2-1,k_3)|$ or $deg_{G}x=|S_{G-x}(k_1,k_2,k_3-1)|.$ Here, $S_{G-x}(k_1,k_2,k_3)$  denotes the set of vertices in  $G-x$ involved in such non-adjacent pairs. Since $deg_{G}x$ (the number of neighbours of $x$ in $G$) and the sizes of these subsets are computable from  $\mathscr{D}(G),$ this card is recognizable.\\
\textbf{Weak reconstruction:}\\ Assume, without loss of generality, that $deg_{G}x= |S_{G-x}(k_1,k_2-1,k_3)|.$ In $G,$ since $G \in \mathscr{C}_2,$ $dv(G,k_1,k_2-1,k_3)=0,$ meaning no non-adjacent pairs in $G$ have exactly $k_1$ common neighbours, $k_2-1$ exclusive to one, and $k_3$ to the other. However, in $G-x,$  $dv(G-x,k_1,k_2-1,k_3)>0,$ indicating such pairs exist in the card. This change arises because removing $x$ reduces the neighbour counts of vertices it was adjacent to, creating the observed non-adjacent pairs in $G-x.$ To reconstruct $G,$ start with $G-x,$ introduce a new vertex $x,$ and connect it to all vertices in $S_{G-x}(k_1,k_2-1,k_3).$ This restores $G$ uniquely: connecting $x$ to these vertices eliminates the non-adjacent pairs counted by  $dv(G-x,k_1,k_2-1,k_3),$  satisfying $dv(G,k_1,k_2-1,k_3)=0,$ and matches the degree condition $deg_{G}x= |S_{G-x}(k_1,k_2-1,k_3)|.$ Any other connection pattern would either alter the degree or fail to zero out the parameter, confirming uniqueness.
\end{proof}
\end{thm}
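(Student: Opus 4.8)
The plan is to mirror the structure of the proof of Theorem \ref{t7}, splitting into a \textbf{Recognition} phase and a \textbf{Weak Reconstruction} phase, but now handling the two symmetric alternatives coming from the definition of $\mathscr{C}_2$ (the vanishing of $dv(G,k_1,k_2-1,k_3)$ versus $dv(G,k_1,k_2,k_3-1)$), with the condition $|k_2-k_3|\ge 2$ doing the essential work of breaking the $u\leftrightarrow v$ symmetry so that the parameters $dv(G,\cdot,\cdot,\cdot)$ that we invoke are genuinely reconstructible (Theorem \ref{t6}, Case 1, where $k_2\ne k_3$ is required). First I would record that, since $\mathscr{H}$ is recognizable (Theorem \ref{HR}), we may assume $G\notin\mathscr{H}$, i.e. $\gamma(G)\ge 3$, so that all of $dv(G,j_1,j_2,j_3)$ for $0\le j_1+j_2+j_3\le n-3$ are reconstructible from $\mathscr{D}(G)$; in particular the hypothesis $G\in\mathscr{C}_2$ — namely $dv(G,k_1,k_2,k_3)>0$ together with $dv(G,k_1,k_2-1,k_3)=0$ or $dv(G,k_1,k_2,k_3-1)=0$ — is a deck-recognizable condition.

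For the \textbf{Recognition} step: the quantities $\deg_G x$ (reconstructible for every card, since $\deg_G x = n-1-|E(G-x)|+\cdots$, or more simply it is determined by the card $G-x$ together with $|E(G)|$, both deck-reconstructible) and $|S_{G-x}(k_1,k_2-1,k_3)|$, $|S_{G-x}(k_1,k_2,k_3-1)|$ are all computable directly from the card $G-x$ and hence from $\mathscr{D}(G)$. Therefore the existence of a card $G-x$ satisfying $\deg_G x=|S_{G-x}(k_1,k_2-1,k_3)|$ (or the other equality) is detectable from the deck, and such a card is identifiable. This shows the subfamily of $\mathscr{C}_2$ satisfying the hypothesis is recognizable.

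For the \textbf{Weak Reconstruction} step: assume without loss of generality $\deg_G x=|S_{G-x}(k_1,k_2-1,k_3)|$ (the case with $k_3-1$ is symmetric, swapping the roles of the two coordinates and using the same $|k_2-k_3|\ge 2$ gap). Since $G\in\mathscr{C}_2$ gives $dv(G,k_1,k_2-1,k_3)=0$ while one checks $dv(G-x,k_1,k_2-1,k_3)>0$ — the deletion of $x$ lowers by one the relevant neighbour-counts of exactly the vertices in $N_G(x)$, so the pairs witnessed in $G-x$ are precisely those obtained from pairs of $G$ by removing $x$ from someone's exclusive-neighbour set — the vertices of $S_{G-x}(k_1,k_2-1,k_3)$ are exactly the candidates for $N_G(x)$. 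Reattaching a new vertex $x$ to precisely the set $S_{G-x}(k_1,k_2-1,k_3)$ restores a graph in which $dv(\cdot,k_1,k_2-1,k_3)=0$ again and in which $\deg x$ matches the prescribed value; I would then argue that any other reconstruction $H$ of $G$ must, for its corresponding card $H-x'$, attach $x'$ to a set of the same size realizing the same vanishing, and that $S_{G-x}(k_1,k_2-1,k_3)$ is the unique such set — because any vertex not in it, if joined to $x$, would fail to annihilate some pair counted by $dv(G-x,k_1,k_2-1,k_3)$, while omitting a vertex of it would leave a surviving pair, contradicting $dv(G,k_1,k_2-1,k_3)=0$; a counting/parity check on $\deg_G x=|S_{G-x}(k_1,k_2-1,k_3)|$ closes off the remaining freedom.

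The main obstacle I expect is the uniqueness argument in weak reconstruction: showing that $S_{G-x}(k_1,k_2-1,k_3)$ is the \emph{only} vertex subset whose attachment both matches the degree $\deg_G x$ and forces $dv(\cdot,k_1,k_2-1,k_3)=0$. The subtlety is that adding $x$ can \emph{create} new $(k_1,k_2-1,k_3)$-pairs even as it destroys old ones — for a pair $\{u,v\}$ not involving any neighbour of $x$, joining $x$ to one of them can push its exclusive count up into the target range. Controlling this requires the $k$-geodetic and diameter-two hypotheses baked into $\mathcal{G}$ (which constrain common-neighbour counts and rule out the degenerate $k_1=0$ behaviour), together with the gap condition $|k_2-k_3|\ge 2$, which guarantees that the shift induced by deleting/adding a single vertex cannot be "absorbed" by the symmetry of the unordered pair; I would isolate this as the key lemma and verify it by a careful case analysis on whether a given pair's vertices lie in $N_G(x)$, in $S_{G-x}(k_1,k_2-1,k_3)$, or outside both.
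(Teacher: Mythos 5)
Your proposal follows essentially the same route as the paper's own proof: recognition via the reconstructibility of the $dv(G,k_1,k_2,k_3)$ parameters together with the deck-computability of $\deg_G x$ and of $|S_{G-x}(k_1,k_2-1,k_3)|$, then weak reconstruction by reattaching a new vertex $x$ to exactly the set $S_{G-x}(k_1,k_2-1,k_3)$. The uniqueness issue you flag as the main obstacle --- that attaching $x$ could \emph{create} new $(k_1,k_2-1,k_3)$-pairs even as it destroys old ones, so one must rule out any other attachment set of the same size --- is precisely the step the paper itself leaves unargued, asserting only that ``any other connection pattern would either alter the degree or fail to zero out the parameter,'' so your version is no less complete than the original.
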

   
\section{Conclusion}
\indent \indent This paper generalizes the results in \cite{g}. For a graph $G \notin \mathscr{H}$ one can use these parameters to reconstruct the graphs as we have done in Section 6 or to recognize the graph classes as in Theorem \ref{HR}. 
This paper introduces two new parameters, $dv(G,k_1,k_2,k_3)$ and $dav(G,k_1,k_2,k_3),$ which extend previous work (e.g., reference [4]) by combining common neighbour counts and vertex degrees for non-adjacent and adjacent vertex pairs, respectively. These parameters may provide tools for graph reconstruction and recognition, yielding the following contributions:
\begin{itemize}
        \item {\bf Reconstruction of Graph Classes}: { For graphs $G$ with $\gamma(G) \geq 3$ (that is, not in the class $\mathscr{H}$ where $\gamma(G)=2$), Theorems \ref{t7} and \ref{t8} demonstrate how $dv$ enable us to recognize $k$-geodetic graphs $G$ with $\mathrm{diam}(G)=\mathrm{diam}(\overline{G})=2$ and weak-reconstruct under specific conditions.} 

 \item {\bf Recognition of Graph Classes}: Theorem \ref{HR}  proves that the class $\mathscr{H}$ (graphs with $\gamma(G)=2$) is recognizable from their decks.
\end{itemize}

The RC's resolution now depends on solving two problems:
\begin{pbm}\label{p1}
Prove that all connected graphs $G$ with $\gamma(G)=2$ are weakly reconstructible.
\end{pbm}  
\begin{pbm}\label{p2}
Prove that all connected graphs $G$ with $\text{diam}(G)=\text{diam}(\overline{G})=2$ are weakly reconstructible.
\end{pbm}

\noindent\textbf{Acknowledgment.} This research work is supported by the National Board for Higher Mathematics (NBHM), Department of Atomic Energy, Government of India, Mumbai through a Major Research project. Sanction order No. 02011/14/2022/NBHM(R.P)/ $\text{R\&D}$II/10491.\\

\end{document}